\documentclass[11pt,a4paper]{article}
\usepackage[T1]{fontenc}
\usepackage[margin=25mm]{geometry}
\usepackage{lmodern}
\usepackage{xcolor}
\usepackage{amsmath,amssymb,amsthm,mathtools,booktabs,microtype,graphicx}
\usepackage[hidelinks]{hyperref}
\hypersetup{pdftitle={Fractional Wiener chaos: Part 2},pdfauthor={Elena Boguslavskaya and Elina Shishkina}}
\mathtoolsset{showonlyrefs}
\newcommand{\R}{\mathbb R}
\newcommand{\C}{\mathbb C}
\newcommand{\N}{\mathbb N}
\newcommand{\E}{\mathbb E}
\newcommand{\dd}{\,\mathrm d}
\newcommand{\He}{\operatorname{He}}
\newcommand{\Dom}{\operatorname{Dom}}

\newcommand{\Span}{\operatorname{span}}
\newcommand{\ind}{\mathbf 1}
\providecommand{\doi}[1]{\href{https://doi.org/#1}{\nolinkurl{https://doi.org/#1}}}
\newcommand{\subclassname}{\textbf{Mathematics Subject Classification (2020)}\enspace}

\theoremstyle{plain}
\newtheorem{theorem}{Theorem}[section]
\newtheorem{proposition}[theorem]{Proposition}
\theoremstyle{definition}
\newtheorem{definition}[theorem]{Definition}

\makeatletter
\newcommand{\subtitle}[1]{\gdef\FWC@subtitle{#1}}
\newcommand{\titlerunning}[1]{} 
\newcommand{\institute}[1]{\FWC@splitinstitutes#1\FWC@stop}
\long\def\FWC@splitinstitutes#1\and#2\FWC@stop{%
  \gdef\FWC@instituteA{#1}%
  \gdef\FWC@instituteB{#2}%
}
\DeclareRobustCommand{\at}{\\}
\newcommand{\email}[1]{\textit{E-mail:}\enspace\href{mailto:#1}{\nolinkurl{#1}}}
\def\FWC@subtitle{}
\let\FWC@originalmaketitle\maketitle
\def\FWC@formatauthors#1\and#2\FWC@stop{%
  \def\@author{#1\thanks{\FWC@instituteA}\and
    #2\thanks{\FWC@instituteB}}%
}
\renewcommand{\maketitle}{%
  \g@addto@macro\@title{\\[0.35em]{\large\FWC@subtitle}}%
  \expandafter\FWC@formatauthors\@author\FWC@stop
  \FWC@originalmaketitle
}
\makeatother

\newcommand{\keywords}[1]{%
  \par\medskip\noindent\textbf{Keywords}\enspace
  \begingroup\def\and{\unskip\enspace\textperiodcentered\enspace}#1\endgroup\par
}
\newcommand{\subclass}[1]{%
  \par\smallskip\noindent\subclassname
  \begingroup\def\and{\unskip\enspace\textperiodcentered\enspace}#1\endgroup\par
}

\begin{document}
\title{Fractional Wiener Chaos: Part 2\\ Interface Spectral Chaos}
\titlerunning{Fractional Wiener chaos: Part 2}
\author{Elena Boguslavskaya \and Elina Shishkina}
\institute{E. Boguslavskaya (corresponding author)\at
Department of Mathematics, Brunel University, Uxbridge UB8 3PH, UK\\
\email{elena@boguslavsky.net}
\and E. Shishkina\at
Department of Mathematical and Applied Analysis, Voronezh State University, Voronezh, Russia;\\
Department of Applied Mathematics and Computer Modeling, Belgorod State National Research University, Belgorod, Russia;\\
Institute of Mathematics, Physics and Information Technology, Kadyrov Chechen State University, Grozny, Russia;\\
International Laboratory of Stochastic Analysis and its Applications, National Research University Higher School of Economics, Moscow, Russia}
\date{}
\maketitle
\raggedbottom
\begin{abstract}
We construct a product spectral decomposition from power-normalised parabolic-cylinder functions. At noninteger real orders, these functions are not square integrable with respect to the corresponding full-line Gaussian measure. Bringing the admissible half-line branches into the same Gaussian space and matching them at the origin gives a corrected orthonormal eigenbasis. A unitary transformation to a weighted probability space makes the eigenfunction corresponding to the lowest eigenvalue constant and permits consistent countable products. The resulting law is non-Gaussian under deformation, and the decomposition recovers classical Wiener chaos when the deformation is removed. We identify the associated closed gradient, adjoint divergence and deformed number operator.

The Rodrigues formula covers all orders occurring in the spectral branches, including negative orders. Under deformation, however, changing both branch orders by the same nonzero amount breaks the required relation between them. Fractional evolution defined through spectral calculus preserves the basis. After unitary identification, the one-coordinate fractional evolution operators converge in operator norm to Gaussian and three-dimensional radial Ornstein--Uhlenbeck fractional evolutions at the respective parameter endpoints. Convergence is uniform on compact time intervals away from zero. An application example with inverse stable clock is provided in the accompanying github repository.

\keywords{Fractional calculus \and parabolic-cylinder functions \and spectral chaos \and ground-state transform \and inverse stable clock}
\subclass{26A33 \and 34L10 \and 60H07 \and 60J60}
\end{abstract}

\section{Introduction}\label{sec:introduction}

Classical Wiener chaos connects Hermite polynomials, multiple stochastic
integrals and the eigenspaces of the Ornstein--Uhlenbeck operator
\cite{CameronMartin,Ito,Janson}.  In the standard normalization, the same nonnegative integer specifies the order of the Hermite polynomial, the multiplicity of the stochastic integral and the eigenvalue of the negative Ornstein--Uhlenbeck generator. Extending the Hermite polynomials to noninteger orders produces functions that are no longer polynomials and raises the question of which connections survive. This paper develops the spectral and differential aspects of this extension.

Our aim is to construct an orthogonal basis for square-integrable observables, together with operators for differentiation and evolution.
Starting from a one-dimensional eigenfunction basis, we divide its elements by the positive eigenfunction corresponding to the lowest eigenvalue and change the measure accordingly. 
This transformation makes the lowest eigenfunction constant while preserving orthogonality.
Products of the transformed eigenfunctions then form a complete orthogonal basis for square-integrable functions of the random coordinates under the resulting product probability measure. 
On this space, we construct a closed gradient and a number operator that acts diagonally on the basis.

Part~I \cite{PartI} used power-normalised parabolic-cylinder functions,
which coincide with the probabilists' Hermite polynomials at
nonnegative integer orders. At other real orders their growth on the
negative half-line prevents Gaussian square integrability. The proposed
joining procedure  \cite{PartI} also used branches normalised under different
Gaussian measures. We amend the construction by transporting both
branches into the same Hilbert space, specifying a common operator,
and matching function values and first derivatives at the origin.
The matching conditions select discrete admissible orders from the
continuous special-function family.

 The eigenvalue problem defining our one-dimensional basis
can be transferred from the Gaussian space to an unweighted
space, where it takes a form studied in
\cite{Chadzitaskos,Chadzitaskos2023}.
This connection provides a starting point for establishing
the Gaussian normalisation and completeness of the basis,
together with the self-adjoint domain of the associated
operator.

The next step addresses the product construction.
To obtain compatible expansions in countably many variables,
we need the one-coordinate basis to begin with the constant
function one. This ensures that introducing an additional
variable does not change a function that is independent
of it. In the deformed basis, however, the normalised first
eigenfunction is positive but nonconstant. Dividing every
basis function by it and using its square as a probability
density relative to the Gaussian measure gives a new
orthonormal basis whose first element is one.
This is the standard ground-state transformation
\cite[Chapter~1]{Fukushima}. The transformed operator generates a
conservative diffusion, and sums of its coordinate eigenvalues determine
the product decomposition.
We call the resulting orthogonal decomposition by total
eigenvalue \emph{spectral chaos}.

The contribution is the resulting spectral framework for this
interface family, together with its domains, parameter limits and
computable threshold coefficients. The tensor-product and
ground-state transform methods are standard, as are general non-Gaussian chaos
representations \cite{SoizeGhanem} and differentiable-measure calculi
\cite{Bogachev,DaPrato}.
At $\mu=1$, our construction recovers Gaussian chaos and
its gradient--divergence calculus \cite{Nualart}.
For $0<\mu<1$, the resulting countable product measure
and the Gaussian product measure on the same coordinate
space are mutually singular, although their
one-coordinate measures are equivalent.

Fractional calculus enters in two ways.
The fractional Rodrigues formula changes the order of
an individual special-function branch.
We justify it on a
domain that includes the negative orders in the left branch of the first eigenfunction and show why equal branch increments fail to preserve the
deformed order relation.
For evolution equations, we retain the basis and introduce
fractional effects through powers of the operator or
through a Caputo time derivative \cite{Kiryakova}. These change the time
dependence of the expansion coefficients
\cite{Balakrishnan,Schilling,Baeumer}.
As $\mu$ tends to one, the one-coordinate dynamics
converge to the Gaussian Ornstein--Uhlenbeck system;
as $\mu$ tends to zero, they converge to the radial part
of a three-dimensional Ornstein--Uhlenbeck process.
The fractional evolution operators converge in norm after the
changing Hilbert spaces have been identified, uniformly on compact
time intervals separated from zero.

Section~\ref{sec:interface} constructs the basis in the
Gaussian space and establishes its operator interpretation.
Section~\ref{sec:chaos} introduces the new probability
measure and the product spectral decomposition.
Section~\ref{sec:rodrigues} examines the fractional Rodrigues
formula and its compatibility with the construction.
Section~\ref{sec:fractional} develops the fractional
evolution and its parameter limits. As an application, we compare collective threshold correlations for
independent diffusions run on a shared inverse stable clock or on
independent copies of that clock. These time changes fit the framework
of \cite{BeghinMacciRicciuti}; the full application and numerical code
are available in online repository \cite{Code}.

\section{The Gaussian interface basis}\label{sec:interface}

\subsection{Special-function conventions}

Write $\N_0=\{0,1,2,\ldots\}$. For $y>0$, let
\begin{equation}\label{GD}
 w_y(x)=(2\pi y)^{-1/2}e^{-x^2/(2y)},\qquad
\gamma_y(\mathrm dx)=w_y(x)\mathrm dx,\qquad
\gamma=\gamma_1.
\end{equation}
Thus $\gamma_y$ is the centred Gaussian probability measure with
variance $y$. The space $L^2(\gamma_y)$ consists of measurable
functions $f:\R\to\C$ such that
\[
\|f\|_{L^2(\gamma_y)}^2
=
\int_{-\infty}^{\infty}|f(x)|^2w_y(x)\,\mathrm dx
<\infty,
\]
with functions identified when they agree $\gamma_y$-almost
everywhere. Its inner product is
\[
\langle f,g\rangle_{L^2(\gamma_y)}
=
\int_{-\infty}^{\infty}
f(x)\overline{g(x)}w_y(x)\,\mathrm dx.
\]

We use the standard parabolic-cylinder function $D_a$, with the
notation of \cite[formula~19.12.1]{AbramowitzStegun}. For
$a,z\in\C$, its hypergeometric representation is
\cite[Sections~12.2 and~12.4]{DLMF}
\[
D_a(z)
={}\sqrt{\pi}\,2^{a/2}e^{-z^2/4}
\Biggl[
\frac{{}_1F_1\!\left(-\frac a2;\frac12;\frac{z^2}{2}\right)}
     {\Gamma((1-a)/2)}
-\frac{\sqrt{2}\,z}{\Gamma(-a/2)}
{}_1F_1\!\left(\frac{1-a}{2};\frac32;\frac{z^2}{2}\right)
\Biggr].
\]
Here  
$
{}_1F_1(b;c;z)
=
\sum_{n=0}^{\infty}
\frac{(b)_n}{(c)_n}\frac{z^n}{n!}$ is Kummer's confluent hypergeometric function,
$
c\notin\{0,-1,-2,\ldots\},
$
where $(b)_0=1$ and
$(b)_n=b(b+1)\cdots(b+n-1)$ for $n\geq1$.
The reciprocal Gamma function $1/\Gamma$ is entire and vanishes
at $0,-1,-2,\ldots$, so the displayed formula also applies
when either Gamma denominator has a pole.

The power normalisation used in Part~I
\cite[Eq.~(3.8)]{PartI} is
\begin{equation}\label{eq:H}
H_a(x,y)
=
y^{a/2}e^{x^2/(4y)}D_a(x/\sqrt y),
\qquad a\in\C,\quad x\in\R,\quad y>0.
\end{equation}
For real $a$, these functions are real-valued. At nonnegative
integer orders they reduce to scaled probabilists' Hermite
polynomials:
\begin{equation}\label{eq:Pol}
H_n(x,y)=y^{n/2}\He_n(x/\sqrt y),
\qquad
\He_n(x)
=
(-1)^ne^{x^2/2}
\frac{\mathrm d^n}{\mathrm dx^n}e^{-x^2/2}.
\end{equation}
In this normalisation,
\[
\int_{-\infty}^{\infty}
\He_n(x)\He_m(x)\,\gamma(\mathrm dx)
=
n!\delta_{nm},
\qquad n,m\in\N_0;
\]
see \cite[Eq.~12.7.2 and Tables~18.3.1, 18.5.1]{DLMF}.

We record the differential equation and positive-axis
asymptotic behaviour needed below:
\begin{equation}\label{eq:Dode}
\begin{gathered}
D_a''(z)+\left(a+\frac12-\frac{z^2}{4}\right)D_a(z)=0,
\\
D_a(z)
=
z^ae^{-z^2/4}\bigl(1+O(z^{-2})\bigr)
\qquad (z\to+\infty).
\end{gathered}
\end{equation}
Here $a$ is fixed, the asymptotic limit is along the positive
real axis. These formulas follow from
\cite[Eqs.~12.2.4--12.2.5 and~12.9.1]{DLMF}.
In particular, the exponential factor in \eqref{eq:H} cancels
the Gaussian decay of $D_a$ and gives
$
H_a(x,y)
=
x^a\bigl(1+O(x^{-2})\bigr)$
for  $x\to+\infty$
for fixed $a$ and $y>0$.

For $\operatorname{Re}a<0$, an integral representation is
\begin{equation}\label{eq:Dint}
D_a(z)
=
\frac{e^{-z^2/4}}{\Gamma(-a)}
\int_0^\infty
t^{-a-1}e^{-t^2/2-zt}\,\mathrm dt,
\qquad z\in\C;
\end{equation}
see \cite[Eqs.~12.2.5 and~12.5.1]{DLMF}.
The restriction $\operatorname{Re}a<0$ ensures convergence
at $t=0$, while the Gaussian factor ensures convergence at
infinity. For $\operatorname{Re}a\geq0$, the displayed
improper integral diverges at zero; $D_a$ remains defined by
the hypergeometric formula above.

The values needed to join solutions at the origin are
\begin{equation}\label{eq:origin}
D_a(0)
=
\frac{2^{a/2}\sqrt{\pi}}{\Gamma((1-a)/2)},
\qquad
D_a'(0)
=
-\frac{2^{(a+1)/2}\sqrt{\pi}}{\Gamma(-a/2)}.
\end{equation}
They follow from
\cite[Eqs.~12.2.5--12.2.7]{DLMF}.
The differential equation and recurrence relations
\cite[Section~12.8]{DLMF}, together with \eqref{eq:H}, give
\begin{equation}\label{eq:Hidentities}
\left(\partial_y+\tfrac12\partial_{xx}\right)H_a=0,
\qquad
\partial_xH_a=aH_{a-1},
\qquad
-y\partial_{xx}H_a+x\partial_xH_a=aH_a.
\end{equation}
These are identities between smooth functions. To interpret
the last identity as an operator eigenvalue equation in
$L^2(\gamma_y)$, square integrability and membership in the
operator domain must also be established. The following
proposition identifies the square-integrability obstruction.

\begin{proposition}[Full-line integrability]
\label{prop:integrability}
For   $a{\in}\mathbb{R}$, $y{>}0$,
$
H_a(\cdot,y){\in}L^2(\gamma_y)$ if and only if $a\in\N_0$.

\end{proposition}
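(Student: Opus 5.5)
Reduce first to $y=1$ by scaling. From \eqref{eq:H}, $H_a(x,y)=y^{a/2}H_a(x/\sqrt y,1)$, and the substitution $x=\sqrt y\,u$ carries $\gamma_y$ to $\gamma$. Hence $\|H_a(\cdot,y)\|_{L^2(\gamma_y)}=y^{a/2}\|H_a(\cdot,1)\|_{L^2(\gamma)}$, and it is enough to treat $y=1$. The case $a\in\N_0$ is immediate from \eqref{eq:Pol}: $H_n(\cdot,1)=\He_n$ is a polynomial, so it is square integrable against the Gaussian, with norm $n!$ by the displayed orthogonality relation.

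For $a\notin\N_0$ the plan is to show that $|H_a(x,1)|^2w_1(x)$ fails to be integrable as $x\to-\infty$. On the positive half-line nothing goes wrong: by \eqref{eq:Dode}, $H_a(x,1)=x^a(1+O(x^{-2}))$, which has only polynomial growth, and $H_a$ is continuous on compacts. The whole question is therefore the behaviour on the negative axis. There I will use the standard connection formula \cite[Eq.~12.2.15]{DLMF},
\[
D_a(-z)=\ldots,
\]
or, more concretely, the large-$|z|$ asymptotics of $D_a$ on the negative real axis \cite[Eq.~12.9.3]{DLMF}:
\[
D_a(-z)\sim \frac{\sqrt{2\pi}}{\Gamma(-a)}\,z^{-a-1}e^{z^2/4}\qquad(z\to+\infty),
\]
which holds whenever $1/\Gamma(-a)\neq0$, i.e.\ $a\notin\N_0$. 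The recessive term $z^ae^{-z^2/4}$ is negligible next to this.

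Inserting this into \eqref{eq:H} gives, for $x\to-\infty$,
\[
H_a(x,1)\sim\frac{\sqrt{2\pi}}{\Gamma(-a)}|x|^{-a-1}e^{x^2/2}.
\]
Then
\[
|H_a|^2w_1\sim \frac{\sqrt{2\pi}}{\Gamma(-a)^2}\,|x|^{-2a-2}e^{x^2/2},
\]
which is not integrable at $-\infty$. Consequently $H_a(\cdot,1)\notin L^2(\gamma)$ for every real $a\notin\N_0$.

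As an independent check for $a<0$, one could instead use \eqref{eq:Dint}. It gives
\[
H_a(x,1)=\Gamma(-a)^{-1}\int_0^\infty t^{-a-1}e^{-t^2/2-xt}\,dt,
\]
and for $x<0$ the integrand is positive. Restricting the integral to $t\in[|x|-1,|x|+1]$, and completing the square via $-t^2/2+|x|t=x^2/2-(t-|x|)^2/2$, yields the lower bound $H_a(x,1)\ge c\,|x|^{-a-1}e^{x^2/2}$ directly. This version avoids citing the asymptotic formula. For $a>0$ noninteger, however, the asymptotic formula (or an argument via the recurrence $\partial_xH_a=aH_{a-1}$) is needed.

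The main obstacle is precisely this step: making sure the coefficient of the dominant growing exponential is nonzero exactly when $a\notin\N_0$. The connection formula settles it, since the coefficient is proportional to $1/\Gamma(-a)$, which vanishes only at $a\in\N_0$.
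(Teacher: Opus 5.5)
Your proof is correct and follows essentially the paper's route. The connection formula gives the dominant term $\sqrt{2\pi}\,\Gamma(-a)^{-1}|x|^{-a-1}e^{x^2/2}$ on the negative axis, and its coefficient vanishes exactly on $\N_0$; your scaling reduction reproduces the paper's general-$y$ factor $y^{a+1/2}$. Two small remarks: the orthogonality relation gives squared norm $n!$, not norm $n!$, and the elementary lower bound from \eqref{eq:Dint} for $a<0$ is a valid independent check but is not needed.
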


\begin{proof}
The function is continuous on bounded intervals, and its
power growth as $x\to+\infty$ is square integrable against
the Gaussian density. For real $a\notin\N_0$, the connection
formula and asymptotic expansions
\cite[Eqs.~12.2.5, 12.2.15, and~12.9.1--12.9.2]{DLMF}
give
\[
H_a(x,y)
=
\frac{\sqrt{2\pi}\,y^{a+1/2}}{\Gamma(-a)}
e^{x^2/(2y)}|x|^{-a-1}
\bigl(1+O(x^{-2})\bigr)
\qquad (x\to-\infty).
\]
The coefficient is nonzero because $a\notin\N_0$.
Consequently, $|H_a(x,y)|^2w_y(x)$ is asymptotic to a
positive constant times
$
e^{x^2/(2y)}|x|^{-2a-2},
$
whose integral over the negative half-line diverges.
For $a=n\in\N_0$, the function $H_n(\cdot,y)$ is a
polynomial and therefore belongs to $L^2(\gamma_y)$.
\end{proof}

This distinction also matters for the stochastic
interpretation in Part~I \cite{PartI}. For a standard Brownian motion
$W$, It\^o's formula and \eqref{eq:Hidentities} give
\[
\mathrm dH_a(W_t,t)
=
aH_{a-1}(W_t,t)\,\mathrm dW_t
\]
locally on time intervals bounded away from zero.
However, since $W_t$ has law $\gamma_t$,
Proposition~\ref{prop:integrability} implies
$
\mathbb E\bigl[|H_a(W_t,t)|^2\bigr]=\infty,$
$t>0$, $a\in\R\setminus\N_0.$
Thus the local It\^o identity does not yield a
square-integrable martingale at these orders.
Failure of square integrability alone does not settle
whether an integrable martingale interpretation is possible.

There is also a variance correction. For a nonzero
deterministic real function $g\in L^2([0,T])$, the Gaussian
random variable
\[
\xi=\int_0^T g(s)\,\mathrm dW_s\quad
{\text{has variance}}\quad
y=\int_0^T |g(s)|^2\,\mathrm ds=\|g\|_2^2.
\]
Accordingly, the matching variance argument is
$H_a(\xi,\|g\|_2^2)$.
These observations correct the variance in Definition~5
and the finite-second-moment interpretation of Theorem~7
of Part~I \cite{PartI}. At integer orders $n\geq1$, the
 martingale property and Gaussian orthogonality give
$
\operatorname{Cov}\bigl(H_n(W_s,s),H_n(W_t,t)\bigr)
=
n!s^n$,
$0<s\leq t$.

\subsection{Differential equation and matching at the origin}

Fix $0<\mu\leq1$ and $y>0$. We consider the differential equation
\begin{equation}\label{eq:DE}
\left(
-y\partial_{xx}+x\partial_x
+c_{\mu,y}x^2\ind_{\{x<0\}}
\right)\Psi_\alpha^{(\mu,y)}
=
\alpha\Psi_\alpha^{(\mu,y)},
\qquad x\neq0,
\end{equation}
where
$
c_{\mu,y}=\frac{1-\mu^2}{4\mu^2y}.
$

We seek nonzero solutions in $L^2(\gamma_y)$ that are
continuously differentiable across the origin. Here
$\alpha\in\R$ is initially a spectral parameter. The matching
conditions will select its admissible values, and the
following subsection will identify them as eigenvalues
of a self-adjoint operator.

The differential expression $(-y\partial_{xx}+x\partial_x)$
is the same on both half-lines. On the left half-line $x<0$, it is supplemented
by multiplication by $c_{\mu,y}x^2$. When $\mu=1$, this
additional term vanishes and the equation reduces to the
Hermite differential equation in \eqref{eq:Hidentities}.
The calculation below explains the choice of $c_{\mu,y}$
and constructs the solutions on the two half-lines.

On the right half-line $x>0$, the function $H_\alpha(x,y)$ satisfies the
required equation by \eqref{eq:Hidentities}. To construct
the solution on $x<0$, we begin with
$H_{\beta_\mu(\alpha)}(u,\mu y)$ for $u>0$, where
$\beta_\mu(\alpha)$ is an order to be determined. It satisfies
\[
(-\mu y\partial_{uu}+u\partial_u)
H_{\beta_\mu(\alpha)}(u,\mu y)
=
\beta_\mu(\alpha)H_{\beta_\mu(\alpha)}(u,\mu y).
\]
The positive-axis asymptotics in \eqref{eq:Dode} show that
these functions belong to $L^2((0,\infty),\gamma_y)$ and
$L^2((0,\infty),\gamma_{\mu y})$, respectively.

For either half-line $I$, we use $L^2(I,\gamma_y)$ for the
space defined using the restriction of $\gamma_y$ to $I$,
with norm
\[
\|f\|_{L^2(I,\gamma_y)}^2
=
\int_I |f(x)|^2w_y(x)\,\mathrm dx.
\]
To express the second function in the same Gaussian space
as the first, define
\begin{equation}\label{eq:rho}
\rho_{\mu,y}(u)
=
\left(\frac{w_{\mu y}(u)}{w_y(u)}\right)^{1/2}
=
\mu^{-1/4}
\exp\left[-\frac{1-\mu}{4\mu y}u^2\right].
\end{equation}
Since $\rho_{\mu,y}^2w_y=w_{\mu y}$ and $w_y$ is even,
\[
\int_{-\infty}^0
\left|\rho_{\mu,y}(-x)f(-x)\right|^2w_y(x)\,\mathrm dx
=
\int_0^\infty |f(u)|^2w_{\mu y}(u)\,\mathrm du.
\]
Consequently,
$
f(u)\longmapsto \rho_{\mu,y}(-x)f(-x)
$
defines a unitary map from
$L^2((0,\infty),\gamma_{\mu y})$ onto
$L^2((-\infty,0),\gamma_y)$.
The proposed solution on $x<0$ is therefore a constant
multiple of
$
\rho_{\mu,y}(-x)H_{\beta_\mu(\alpha)}(-x,\mu y).
$

We now determine the order $\beta_\mu(\alpha)$.
Under the substitution $u=-x>0$, the differential expression
on the negative half-line becomes
$
-y\partial_{uu}+u\partial_u+c_{\mu,y}u^2.
$
Using
\[
\frac{\rho_{\mu,y}'(u)}{\rho_{\mu,y}(u)}
=
-\frac{1-\mu}{2\mu y}u,
\]
the product rule gives, for a smooth function $f$,
\[
\begin{aligned}
&\rho_{\mu,y}^{-1}
(-y\partial_{uu}+u\partial_u+c_{\mu,y}u^2)
(\rho_{\mu,y}f)
\\
&\qquad=
-yf''+\frac{u}{\mu}f'
+\frac{1-\mu}{2\mu}f
+\left(
c_{\mu,y}-\frac{1-\mu^2}{4\mu^2y}
\right)u^2f.
\end{aligned}
\]
The chosen value of $c_{\mu,y}$ cancels the final term.
Substituting $f(u)=H_{\beta_\mu(\alpha)}(u,\mu y)$ therefore
yields
\[
\begin{aligned}
&(-y\partial_{uu}+u\partial_u+c_{\mu,y}u^2)
\left[
\rho_{\mu,y}(u)H_{\beta_\mu(\alpha)}(u,\mu y)
\right]
\\
&\qquad=
\frac{\beta_\mu(\alpha)+(1-\mu)/2}{\mu}\,
\rho_{\mu,y}(u)H_{\beta_\mu(\alpha)}(u,\mu y).
\end{aligned}
\]
For this function to satisfy the prescribed equation with
parameter $\alpha$, its order must satisfy
\begin{equation}\label{eq:order}
\beta_\mu(\alpha)
=
\mu\alpha-\frac{1-\mu}{2}.
\end{equation}
Thus the rescaling and constant shift in
\eqref{eq:order} follow from the chosen differential
expression. The multiplication by $\rho_{\mu,y}$ serves
to identify the two weighted $L^2$ spaces.

With \eqref{eq:order}, the proposed functions satisfy the
required equation on their respective half-lines.
It remains to match their values and first derivatives
at zero. By \eqref{eq:H} and \eqref{eq:rho},
\[
\rho_{\mu,y}(-x)
H_{\beta_\mu(\alpha)}(-x,\mu y)
=
\mu^{-1/4}(\mu y)^{\beta_\mu(\alpha)/2}
e^{x^2/(4y)}
D_{\beta_\mu(\alpha)}(-x/\sqrt{\mu y}).
\]
Absorbing the constant factors into real coefficients
$A_{\alpha,y}$ and $B_{\alpha,y}$ gives
\begin{equation}\label{eq:ansatz}
\Psi_\alpha^{(\mu,y)}(x)
=
e^{x^2/(4y)}
\begin{cases}
A_{\alpha,y}D_\alpha(x/\sqrt y),
&x\geq0,\\
B_{\alpha,y}D_{\beta_\mu(\alpha)}(-x/\sqrt{\mu y}),
&x<0.
\end{cases}
\end{equation}
The coefficients are to be chosen not both zero; their
dependence on the fixed parameter $\mu$ is implicit.

We impose the matching conditions
\[
\Psi_\alpha^{(\mu,y)}(0^-)
=
\Psi_\alpha^{(\mu,y)}(0^+),
\qquad
(\Psi_\alpha^{(\mu,y)})'(0^-)
=
(\Psi_\alpha^{(\mu,y)})'(0^+).
\]
These conditions ensure that the second distributional
derivative contains neither a Dirac distribution nor its
derivative at zero. The joined function \eqref{eq:ansatz} then satisfies
the differential equation \eqref{eq:DE} on the whole line in the
distributional sense.

The common exponential factor in \eqref{eq:ansatz} has
value one and derivative zero at the origin.
Differentiating the expression on $x<0$ contributes
a minus sign and a factor $1/\sqrt{\mu y}$.
The matching conditions can therefore be written as
\begin{equation}\label{eq:matrix}
\begin{pmatrix}
D_\alpha(0)&-D_{\beta_\mu(\alpha)}(0)\\
D_\alpha'(0)&\mu^{-1/2}D_{\beta_\mu(\alpha)}'(0)
\end{pmatrix}
\binom{A_{\alpha,y}}{B_{\alpha,y}}
=0.
\end{equation}
A nonzero pair of coefficients exists precisely when the
determinant vanishes. Substitution of \eqref{eq:origin} into \eqref{eq:matrix}
gives the equivalent condition $\Delta_\mu(\alpha)=0$,
where
\begin{equation}\label{eq:delta}
\Delta_\mu(\alpha)
=
\frac{\sqrt\mu}
{\Gamma(-\alpha/2)\Gamma((1-\beta_\mu(\alpha))/2)}
+
\frac{1}
{\Gamma((1-\alpha)/2)\Gamma(-\beta_\mu(\alpha)/2)}.
\end{equation}
The order relation \eqref{eq:order} ensures that the two
pieces satisfy the prescribed differential equation,
while \eqref{eq:delta} selects the parameters for which
their values and derivatives agree at zero.
The matching condition is independent of $y$.

Neither column of the matrix in \eqref{eq:matrix} is zero.
Indeed, uniqueness for the initial-value problem associated
with \eqref{eq:Dode} implies that a solution whose value
and derivative both vanish at zero is identically zero.
This cannot occur for $D_a$, by its positive-axis
asymptotic behaviour. At a root of $\Delta_\mu$, the
matrix therefore has rank one. The matching coefficients
are consequently unique up to multiplication by a common
nonzero scalar, and both coefficients are nonzero.

If $D_\alpha(0)$ and $D_{\beta_\mu(\alpha)}(0)$ are
nonzero, i.e. $\alpha,\beta_\mu(\alpha)\notin \{1,3,5,\dots\}$, one may take
\[
(A_{\alpha,y},B_{\alpha,y})
=
\bigl(D_{\beta_\mu(\alpha)}(0),D_\alpha(0)\bigr).
\]
At a root of $\Delta_\mu$, if either of these values
vanishes, the matching conditions imply that both vanish.
Their derivatives are then nonzero, and one may instead
take
\[
(A_{\alpha,y},B_{\alpha,y})
=
\bigl(
-\mu^{-1/2}D_{\beta_\mu(\alpha)}'(0),
D_\alpha'(0)
\bigr).
\]
This second case is necessary to recover the full Hermite
system. When $\mu=1$, we have $\beta_1(\alpha)=\alpha$ and
\[
\Delta_1(\alpha)
=
\frac{2}
{\Gamma(-\alpha/2)\Gamma((1-\alpha)/2)}.
\]
Its real zeros are exactly $\N_0$. Dividing the matching
conditions by the values at zero would omit all odd
Hermite polynomials, since those values vanish.

The change of Gaussian weight and the order relation above
modify the corresponding construction in Part~I
\cite[Definition~4]{PartI}. The following subsection
specifies the self-adjoint realisation of the differential
expression and proves that the normalised matched functions
form a complete orthonormal basis of eigenfunctions in
$L^2(\gamma_y)$. The proof does not rely on the full-line
completeness assertion in Part~I \cite[Theorem~9]{PartI}.

  \subsection{Closed form, domain and spectrum}
\label{section: closed form, domain and spectrum}
The functions constructed in \eqref{eq:ansatz}, with
$\Delta_\mu(\alpha)=0$, satisfy the differential equation  \eqref{eq:DE}
and the matching conditions at zero. We now specify a
self-adjoint operator for which they are eigenfunctions
and prove that their normalisations form an orthonormal
basis of $L^2(\gamma_y)$.

For an interval $I$ and $m\in\{1,2\}$, let
$W^{m,2}(I,\mathrm dx)$ denote the space of functions
whose weak derivatives up to order $m$ belong to
$L^2(I,\mathrm dx)$, with norm
\[
\|f\|_{W^{m,2}(I,\mathrm dx)}^2
=
\sum_{j=0}^m\int_I |f^{(j)}(x)|^2\,\mathrm dx,
\qquad f^{(0)}=f.
\]
The weighted space $W^{1,2}(I,\gamma_y)$ consists of
locally absolutely continuous functions $f$ such that
$f,f'\in L^2(I,\gamma_y)$, with norm
\[
\|f\|_{W^{1,2}(I,\gamma_y)}^2
=
\int_I
\bigl(|f(x)|^2+|f'(x)|^2\bigr)\,\gamma_y(\mathrm dx).
\]
We abbreviate $W^{1,2}(\R,\gamma_y)$ to
$W^{1,2}(\gamma_y)$.

For every $f\in W^{1,2}(\gamma_y)$,
\begin{equation}\label{eq:gaussianbound}  
\|xf\|_{L^2(\gamma_y)}^2
\leq
2y\|f\|_{L^2(\gamma_y)}^2
+
4y^2\|f'\|_{L^2(\gamma_y)}^2.
\end{equation}
Indeed, for $f\in C_c^\infty(\R)$, integration by parts gives
\[
\int_{-\infty}^{\infty}x^2|f(x)|^2\,\gamma_y(\mathrm dx)
=
y\|f\|_{L^2(\gamma_y)}^2
+
2y\operatorname{Re}
\int_{-\infty}^{\infty}
xf(x)\overline{f'(x)}\,\gamma_y(\mathrm dx).
\]
 Cauchy--Schwarz inequality and
\[
2y\|xf\|_{L^2(\gamma_y)}\|f'\|_{L^2(\gamma_y)}
\leq
\frac12\|xf\|_{L^2(\gamma_y)}^2
+
2y^2\|f'\|_{L^2(\gamma_y)}^2
\]
give \eqref{eq:gaussianbound}. Approximation by compactly
supported smooth functions extends the estimate to
$W^{1,2}(\gamma_y)$.

Define the sesquilinear form
\begin{equation}\label{eq:form}
\begin{aligned}
a_{\mu,y}[f,g]
&=
y\int_{-\infty}^{\infty}
f'(x)\overline{g'(x)}\,\gamma_y(\mathrm dx)
+
c_{\mu,y}\int_{-\infty}^0
x^2f(x)\overline{g(x)}\,\gamma_y(\mathrm dx),
\\
\Dom(a_{\mu,y})
&=W^{1,2}(\gamma_y).
\end{aligned}
\end{equation}
Here $a_{\mu,y}[f,g]$ denotes the value of the form at
$(f,g)$, and $a_{\mu,y}[f,f]$ is its quadratic value.
Since $y\geq0$, $c_{\mu,y}\geq0$, the form \eqref{eq:form} is nonnegative.
Estimate \eqref{eq:gaussianbound} shows that the form norm
\[
\left(
\|f\|_{L^2(\gamma_y)}^2+a_{\mu,y}[f,f]
\right)^{1/2}
\]
is equivalent to $\|f\|_{W^{1,2}(\gamma_y)}$, with constants
depending on $\mu$ and $y$. The form is therefore closed.

The representation theorem for densely defined closed
nonnegative forms \cite[Chapter~VI, Section~2]{Kato}
determines a unique nonnegative self-adjoint operator
$A_{\mu,y}$. Its domain consists of those
$f\in W^{1,2}(\gamma_y)$ for which there exists
$h\in L^2(\gamma_y)$ satisfying
\[
a_{\mu,y}[f,g]
=
\langle h,g\rangle_{L^2(\gamma_y)}
\qquad
\text{for every }g\in W^{1,2}(\gamma_y).
\]
For such $f$, the vector $h$ is unique and
$A_{\mu,y}f=h$.

\begin{theorem}[Self-adjoint realisation and spectral basis]
\label{thm:spectrum}
The operator determined by \eqref{eq:form} acts as
\begin{equation}\label{eq:A}
A_{\mu,y}f
=
-yf''+xf'
+c_{\mu,y}x^2\ind_{\{x<0\}}f.
\end{equation}
Its domain consists of the functions
$f\in W^{1,2}(\gamma_y)$ such that, for every $R>0$,
\[
f|_{(-R,0)}\in W^{2,2}((-R,0),\mathrm dx),
\qquad
f|_{(0,R)}\in W^{2,2}((0,R),\mathrm dx),
\]
the expression in \eqref{eq:A} belongs to $L^2(\gamma_y)$,
and
$
f'(0^-)=f'(0^+).
$

The resolvent $(A_{\mu,y}+I)^{-1}$ is compact.
The eigenvalues of $A_{\mu,y}$ are independent of $y$ and form a simple
sequence
\[
0\leq\alpha_0(\mu)<\alpha_1(\mu)<\cdots
\longrightarrow\infty.
\]
A real number $\alpha$ is an eigenvalue of $A_{\mu,y}$ if and only if
$\Delta_\mu(\alpha)=0$, where $\Delta_\mu$ is defined
in \eqref{eq:delta}.

For $k\in\N_0$, 
\begin{equation}\label{eq:EigenA_mu_y}
A_{\mu,y}e_k^{(\mu,y)}
=
\alpha_k(\mu)e_k^{(\mu,y)},\qquad e_k^{(\mu,y)}
=
\frac{\Psi_{\alpha_k(\mu)}^{(\mu,y)}}
{\bigl\|\Psi_{\alpha_k(\mu)}^{(\mu,y)}
\bigr\|_{L^2(\gamma_y)}}.
\end{equation}

These real-valued functions $e_k^{(\mu,y)}$ form an orthonormal basis
of $L^2(\gamma_y)$. The eigenfunction $e_k^{(\mu,y)}$ corresponding to the
smallest eigenvalue is strictly positive:
\[
e_0^{(\mu,y)}(x)>0
\qquad\text{for every }x\in\R.
\]
\end{theorem}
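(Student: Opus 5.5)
We plan to prove the theorem in four steps: identify the operator, prove compactness of the resolvent, characterise the eigenvalues, and then read off the basis and positivity.

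\emph{Action and domain.} We first test the defining identity $a_{\mu,y}[f,g]=\langle h,g\rangle_{L^2(\gamma_y)}$ against $g\in C_c^\infty$ supported in one open half-line. Since $w_y>0$ is smooth, this gives
\[
-(y w_y f')'+c_{\mu,y}x^2\ind_{\{x<0\}}w_yf=hw_y
\]
in the distributional sense. Using $w_y'=-(x/y)w_y$, this reduces to $-yf''+xf'+c_{\mu,y}x^2\ind_{\{x<0\}}f=h$ on each half-line. Hence $f''$ is locally in $L^2$ on each side, so $f|_{(\pm R,0)}\in W^{2,2}$ for every $R>0$.

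Next we take $g\in C_c^\infty(\R)$ with $g(0)\neq0$ and integrate by parts on each side separately. All interior terms cancel and the only survivor is the boundary term $y\,w_y(0)\bigl(f'(0^-)-f'(0^+)\bigr)\overline{g(0)}=0$, which forces $f'(0^-)=f'(0^+)$. Continuity of $f$ itself is automatic from $W^{1,2}$.

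For the converse, take $f$ with these properties. Integration by parts shows the identity holds for compactly supported $g\in W^{1,2}(\gamma_y)$. Such functions are dense in the form norm, and both sides are continuous in that norm, so the identity holds for all $g$.

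\emph{Compact resolvent.} By the representation theorem, compactness of $(A_{\mu,y}+I)^{-1}$ is equivalent to compactness of the embedding of $W^{1,2}(\gamma_y)$, with the form norm, into $L^2(\gamma_y)$. We prove this from \eqref{eq:gaussianbound}.

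On a bounded set in $W^{1,2}(\gamma_y)$, $\|xf\|_{L^2(\gamma_y)}$ is uniformly bounded. This gives tightness:
\[
\int_{|x|>R}|f|^2\,\gamma_y(\mathrm dx)\le R^{-2}\|xf\|_{L^2(\gamma_y)}^2.
\]
On $[-R,R]$ the weight is bounded above and below, so Rellich's theorem applies there. A diagonal argument then yields a convergent subsequence in $L^2(\gamma_y)$.

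Consequently the spectrum is discrete, consists of eigenvalues $\to\infty$, and the eigenvectors form an orthonormal basis. The operator is nonnegative because the form is, so all eigenvalues are $\ge0$.

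\emph{Eigenvalues and simplicity.} Let $f$ be an eigenfunction with eigenvalue $\alpha$. On $x>0$, the ODE is second order after the transformation $f=e^{x^2/(4y)}D(\cdot/\sqrt y)$, which takes it to Weber's equation \eqref{eq:Dode}. The solution space is spanned by $D_\alpha(x/\sqrt y)$ and $D_{-\alpha-1}(ix/\sqrt y)$. The second solution, after the $e^{x^2/(4y)}$ factor, grows like $e^{x^2/(2y)}|x|^{-\alpha-1}$ and is not in $L^2(\gamma_y)$ near $+\infty$ (as in Proposition~\ref{prop:integrability}). Hence $f=A\,e^{x^2/(4y)}D_\alpha(x/\sqrt y)$ on $x>0$.

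On $x<0$, the $\rho_{\mu,y}$ conjugation computation preceding \eqref{eq:order} reduces the equation to the same type. The only admissible branch is then $B\,e^{x^2/(4y)}D_{\beta_\mu(\alpha)}(-x/\sqrt{\mu y})$.

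Domain membership gives continuity of $f$ and $f'$ at $0$, which is exactly \eqref{eq:matrix}. A nonzero $(A,B)$ therefore exists iff $\Delta_\mu(\alpha)=0$. Conversely, each root yields such an $f$, which is in $L^2(\gamma_y)$ by the positive-axis asymptotics on both sides. It lies in the domain by the characterisation above: its image under \eqref{eq:A} is $\alpha f$, it has local $W^{2,2}$ regularity, and it satisfies the $C^1$ matching.

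Simplicity follows because the admissible solution on each half-line is one-dimensional and the rank-one property of \eqref{eq:matrix} fixes $(A,B)$ up to scale. Independence of $y$ holds because $\Delta_\mu$ does not involve $y$. Ordering the roots gives $\alpha_0(\mu)<\alpha_1(\mu)<\cdots$ and \eqref{eq:EigenA_mu_y}. The functions are real-valued because the coefficients $(A,B)$ can be chosen real.

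\emph{Positivity of $e_0^{(\mu,y)}$ — the main obstacle.} We plan a variational argument. The ground state minimises the Rayleigh quotient of $a_{\mu,y}$. Since $|f|\in W^{1,2}(\gamma_y)$ with $a_{\mu,y}[|f|,|f|]\le a_{\mu,y}[f,f]$, the function $|e_0|$ is also a minimiser. By simplicity, $|e_0|=\pm e_0$, so $e_0$ has one sign; we fix the sign so that $e_0\ge0$.

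It remains to exclude zeros. Suppose $e_0(x_0)=0$. Then $x_0$ is a minimum of $e_0$, so $e_0'(x_0)=0$; at $x_0=0$ this uses the $C^1$ matching. ODE uniqueness on the half-line containing $x_0$ then forces $e_0\equiv0$ there. Continuity and $C^1$ matching carry the zero Cauchy data across the origin, so $e_0\equiv0$ on the other half-line as well, which is a contradiction.

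An alternative for the sign step is a Sturm comparison: use the ODE on each half-line and the fact that $D_a$ has no positive zeros when $a$ is small. The variational route is cleaner, and checking $|f|\in W^{1,2}(\gamma_y)$ is standard.
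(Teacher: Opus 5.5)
Your proposal is correct and follows essentially the same route as the paper: you identify the action and the condition $f'(0^-)=f'(0^+)$ from the weak form, prove compactness by Rellich on bounded intervals plus the tail bound coming from \eqref{eq:gaussianbound}, and use uniqueness of the square-integrable branch on each half-line to reduce to \eqref{eq:matrix} and $\Delta_\mu(\alpha)=0$. Positivity is obtained, as in the paper, by replacing the minimiser with its modulus and then invoking ODE uniqueness. The only differences are cosmetic: you work directly in $L^2(\gamma_y)$ instead of passing through the unitary $J_y$, and you read off $y$-independence from $\Delta_\mu$ rather than from the scaling $x=\sqrt y\,z$.
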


\begin{proof}
We first identify the domain of $A_{\mu,y}$. Taking a test function
$g\in C_c^\infty(\R)$ supported in either open half-line
in the form identity gives \eqref{eq:A} in distributions.
On bounded intervals, the Gaussian density is bounded
above and below by positive constants. Since $f,f'$ and
$A_{\mu,y}f$ are square integrable there, the equation \eqref{eq:DE}
implies the stated $W^{2,2}$ regularity on both sides
of zero.

For a compactly supported smooth test function whose
support crosses zero, integration by parts gives
\[
a_{\mu,y}[f,g]
=
\langle A_{\mu,y}f,g\rangle_{L^2(\gamma_y)}
+
yw_y(0)
\bigl(f'(0^-)-f'(0^+)\bigr)\overline{g(0)}.
\]
The form identity therefore requires
$f'(0^-)=f'(0^+)$. Conversely, the stated domain
conditions give the form identity for every
$g\in C_c^\infty(\R)$. Density in the form norm extends
it to every $g\in W^{1,2}(\gamma_y)$.

To prove compactness of $(A_{\mu,y}+I)^{-1}$, consider the unitary map
\[
J_y:L^2(\gamma_y)\longrightarrow L^2(\R,\mathrm dx),
\qquad
J_yf=w_y^{1/2}f.
\]
Direct differentiation gives
\begin{equation}\label{eq:schrodinger}
J_yA_{\mu,y}J_y^{-1}
=
-y\partial_{xx}-\frac12
+
\begin{cases}
x^2/(4\mu^2y),&x<0,\\
x^2/(4y),&x\geq0,
\end{cases}
\end{equation}
where the final term acts by multiplication. Moreover,
\[
J_y\bigl(W^{1,2}(\gamma_y)\bigr)
=
\left\{
u\in W^{1,2}(\R,\mathrm dx):
xu\in L^2(\R,\mathrm dx)
\right\}.
\]
This follows from \eqref{eq:gaussianbound} and
\[
y\|f'\|_{L^2(\gamma_y)}^2
=
y\|(J_yf)'\|_{L^2(\R,\mathrm dx)}^2
+
\frac1{4y}\|xJ_yf\|_{L^2(\R,\mathrm dx)}^2
-\frac12\|J_yf\|_{L^2(\R,\mathrm dx)}^2,
\]
first for compactly supported smooth functions and then
by approximation.

A sequence bounded in the form norm has images under
$J_y$ bounded in $W^{1,2}(\R,\mathrm dx)$, with
$\|xJ_yf\|_{L^2(\R,\mathrm dx)}$ uniformly bounded.
Rellich's theorem \cite{Rel} gives compactness on each bounded
interval, while
\[
\int_{|x|>R}|(J_yf)(x)|^2\,\mathrm dx
\leq
R^{-2}\|xJ_yf\|_{L^2(\R,\mathrm dx)}^2
\]
controls the remaining part of the integral uniformly.
The form domain therefore embeds compactly into
$L^2(\gamma_y)$, proving compactness of
$(A_{\mu,y}+I)^{-1}$.

On each half-line, the square-integrable solution of
the transformed differential equation is unique up to
a constant factor. The parabolic-cylinder asymptotics
identify these solutions as
\[
D_\alpha(x/\sqrt y)
\quad (x>0),
\qquad
D_{\beta_\mu(\alpha)}(-x/\sqrt{\mu y})
\quad (x<0).
\]
Multiplication by $w_y^{1/2}$ preserves the matching
conditions because $w_y^{1/2}(0)>0$ and
$(w_y^{1/2})'(0)=0$. Hence every eigenfunction has the
form \eqref{eq:ansatz}, with coefficients satisfying
\eqref{eq:matrix}. Conversely, the asymptotic behaviour
and matching conditions show that the functions so
constructed belong to the stated operator domain.
Thus $\alpha$ is an eigenvalue exactly when
$\Delta_\mu(\alpha)=0$.

At each such value, the matching matrix has a
one-dimensional nullspace, so the eigenvalue is simple.
The spectral theorem for self-adjoint operators with
compact resolvent gives completeness.

The smallest eigenvalue is the minimum of
\[
\frac{a_{\mu,y}[f,f]}{\|f\|_{L^2(\gamma_y)}^2},
\qquad f\neq0.
\]
Replacing a minimiser by its modulus preserves its norm
and does not increase the form value. We may therefore
choose a nonnegative eigenfunction for this eigenvalue.
If this continuously differentiable function vanished
at a point, its derivative would vanish there as well.
Uniqueness for the differential equation would then
make it identically zero. It is therefore strictly
positive.

Finally, the unitary change of variables
$f(x)\mapsto f(\sqrt y\,z)$ from $L^2(\gamma_y)$ to
$L^2(\gamma)$ identifies $A_{\mu,y}$ with $A_{\mu,1}$.
Thus the eigenvalues do not depend on $y$.
\end{proof}

As $\mu$ decreases, the coefficient $c_{\mu,y}$ increases,
and the corresponding quadratic forms are ordered.
The limiting comparison on $(0,\infty)$ uses the
differential expression $(-y\partial_{xx}+x\partial_x)$
with the boundary condition $f(0)=0$.

\begin{proposition}[Spectral bounds and endpoint limits]
\label{prop:limits}
For $k\in\N_0$,
\begin{equation}\label{eq:bounds}
k\leq\alpha_k(\mu)\leq2k+1,
\qquad
\lim_{\mu\uparrow1}\alpha_k(\mu)=k,
\qquad
\lim_{\mu\downarrow0}\alpha_k(\mu)=2k+1.
\end{equation}
Each eigenvalue $\alpha_k(\mu)$ of operator \eqref{eq:A} is real analytic on $(0,1)$ and strictly
decreasing on $(0,1]$. In particular,
$0<\alpha_0(\mu)<1$,
$0<\mu<1$.
\end{proposition}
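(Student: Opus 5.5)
The approach is min--max comparison of quadratic forms, combined with analytic perturbation theory in $\mu$. By Theorem~\ref{thm:spectrum} the eigenvalues do not depend on $y$, so we take $y=1$ and write $a_\mu=a_{\mu,1}$, $c_\mu=c_{\mu,1}=(1-\mu^2)/(4\mu^2)$. The domain $W^{1,2}(\gamma)$ is fixed, and $c_\mu$ is strictly decreasing in $\mu$. Hence $a_\mu\geq a_{\mu'}$ for $\mu<\mu'$, and the Courant--Fischer characterisation of the eigenvalues of an operator with compact resolvent shows that each $\alpha_k(\mu)$ is nonincreasing on $(0,1]$.

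\textbf{Bounds.} For the lower bound we use $a_\mu\geq a_1$. Since $a_1$ is the Ornstein--Uhlenbeck form, whose eigenvalues are $\N_0$ with Hermite eigenfunctions, we get $\alpha_k(\mu)\geq k$. For the upper bound we pass to a larger form $a_0$ defined on the closed subspace $\{f\in W^{1,2}(\gamma):f=0 \text{ on }(-\infty,0]\}$ by $a_0[f,f]=\int_0^\infty|f'|^2\dd\gamma$; this is the formal limit $c_\mu=\infty$. On this subspace the added term vanishes, so $a_\mu=a_0$ there, and min--max over the smaller domain gives $\alpha_k(\mu)\leq\lambda_k(a_0)$. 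The eigenfunctions of $a_0$ are the Dirichlet solutions on $(0,\infty)$. Among the Hermite functions, the odd polynomials $\He_{2k+1}$ restricted to $x>0$ vanish at $0$, and by Theorem~\ref{thm:spectrum}-type reasoning (uniqueness of the $L^2$ branch $D_\alpha$ with $D_\alpha(0)=0$, i.e.\ $1/\Gamma((1-\alpha)/2)=0$) the Dirichlet spectrum is exactly $\{2k+1\}$. This gives $\alpha_k(\mu)\leq 2k+1$.

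\textbf{Strict monotonicity and analyticity.} Analyticity comes from the Kato theory of holomorphic families of type (B). The forms $a_\mu$ have a common domain and depend analytically on $\mu$ through $c_\mu$, and the eigenvalues are simple by Theorem~\ref{thm:spectrum}, so each $\alpha_k$ is real analytic on $(0,1]$. Alternatively, one can use the implicit function theorem applied to $\Delta_\mu(\alpha)=0$, which is analytic in both variables, once $\partial_\alpha\Delta_\mu\neq0$ is checked through simplicity. The Hellmann--Feynman formula then gives
\[
\alpha_k'(\mu)=c_\mu'\int_{-\infty}^0x^2|e_k^{(\mu,1)}|^2\dd\gamma<0,
\]
because $c_\mu'<0$ for $\mu<1$ and $e_k$ cannot vanish identically on the half-line (the matching conditions and ODE uniqueness exclude this). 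At $\mu=1$ the derivative of $c_\mu$ vanishes, but strict decrease on $(0,1]$ still follows from strict decrease on $(0,1)$ together with continuity at $1$. Taking $k=0$, strict decrease gives $\alpha_0(\mu)>\alpha_0(1)=0$. Moreover $\alpha_0(\mu)<1$ holds: $0\leq\alpha_0\leq1$ with the endpoint excluded, since $\Delta_\mu(1)\neq0$ for $\mu<1$, as a direct check of \eqref{eq:delta} shows; alternatively, the limit $\alpha_0\to1$ as $\mu\downarrow0$ combined with strict decrease gives $\alpha_0(\mu)<1$.

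\textbf{Limits.} As $\mu\uparrow1$, the forms decrease monotonically to $a_1$ on a common domain. Monotone convergence of forms (Kato, VIII.3), together with compact resolvents, gives norm-resolvent convergence and hence $\alpha_k(\mu)\to k$. As $\mu\downarrow0$, the forms increase, and $c_\mu\to\infty$ forces the limit form to be $a_0$ on the subspace vanishing on the negative axis. The monotone convergence theorem for increasing forms (Simon's theorem) gives strong resolvent convergence, and this must be upgraded to convergence of individual eigenvalues. This upgrade is the main obstacle. I would try to obtain it from uniform compactness: the form norms dominate a fixed compactly embedded norm, so the resolvents converge in norm. As a cross-check, one can work directly with $\Delta_\mu$: as $\mu\to0$ one has $\beta_\mu(\alpha)\to-1/2$, and the first term of $\Delta_\mu$ carries the prefactor $\sqrt\mu\to0$. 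The limiting equation is therefore $1/\bigl(\Gamma((1-\alpha)/2)\Gamma(1/4)\bigr)=0$, whose zeros are $\alpha=2k+1$, and Hurwitz's theorem then locates the roots for small $\mu$.
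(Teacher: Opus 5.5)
Your proof is correct. The bounds, the monotonicity and the analyticity follow the paper:
\begin{itemize}
\item the lower bound comes from $a_\mu\ge a_1$;
\item the upper bound comes from the Dirichlet half-line problem. The paper simply takes the trial span of $H_1,H_3,\ldots,H_{2k+1}$ restricted to $(0,\infty)$ and extended by zero, which makes your comparison explicit without identifying the Dirichlet spectrum through $D_\alpha(0)=0$;
\item analyticity and strict decrease come from Kato's theory and the Hellmann--Feynman formula for $\alpha_k'$.
\end{itemize}
One slip: $c_\mu'=-1/(2\mu^3)$ does not vanish at $\mu=1$. The derivative formula therefore already gives $\alpha_k'(1)<0$, and your fallback argument is harmless but unnecessary.

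The genuine difference is at the endpoints.

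For $\mu\uparrow1$, the paper uses no resolvent convergence. Min--max on the span of $H_0,\ldots,H_k$ gives $\alpha_k(\mu)\le k+O(c_{\mu,y})$ directly.

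For $\mu\downarrow0$, the paper argues variationally rather than through a monotone convergence theorem for forms:
\begin{itemize}
\item the first $k+1$ normalised eigenfunctions are bounded in $W^{1,2}(\gamma_y)$, so a subsequence converges in $L^2(\gamma_y)$;
\item the bound $c_{\mu,y}\int_{-\infty}^0x^2|e_j^{(\mu,y)}|^2\,\mathrm d\gamma_y\le\alpha_j(\mu)$ forces the limits to vanish on $(-\infty,0)$;
\item weak lower semicontinuity plus min--max for the Dirichlet form then give $\lim_{\mu\downarrow0}\alpha_k(\mu)\ge2k+1$.
\end{itemize}
This bypasses exactly the step you flag as the main obstacle.

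Your upgrade does go through. For $u=(A_\mu+I)^{-1}f$ one has $a_1[u,u]+\|u\|^2\le a_\mu[u,u]+\|u\|^2\le\|f\|^2$. So the resolvents map the unit ball into a fixed set that is relatively compact in $L^2(\gamma)$. Strong convergence of these self-adjoint operators therefore improves to norm convergence, and $1/(1+\alpha_k(\mu))\to1/(2k+2)$.

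Your Hurwitz cross-check is a more explicit route through $\Delta_\mu$. To make it a proof you need the counting step. The function $\Delta_\mu$ converges uniformly on compact sets to a limit with simple zeros at $1,3,5,\ldots$. Combined with the a priori bounds $j\le\alpha_j(\mu)\le2j+1$ and the characterisation in Theorem~\ref{thm:spectrum}, this gives, for small $\mu$, exactly $k+1$ zeros in a neighbourhood of $[0,2k+1]$, one near each odd integer. That fixes the indexing.

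Your check that $\Delta_\mu(1)\neq0$ for $0<\mu<1$ is valid. The second term vanishes because $1/\Gamma(0)=0$, and the first equals $\sqrt\mu/(\Gamma(-1/2)\Gamma(3(1-\mu)/4))\neq0$. This gives $\alpha_0<1$ without the limit; the paper instead uses strict monotonicity together with $\lim_{\mu\downarrow0}\alpha_0(\mu)=1$.

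Likewise, you deduce $\alpha_0>0$ from strict decrease and $\alpha_0(1)=0$. The paper argues directly: a zero form value would force a constant eigenfunction that vanishes on the negative half-line.
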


\begin{proof}
The inequality
$a_{\mu,y}[f,f]\geq a_{1,y}[f,f]$ and the min--max
principle give $\alpha_k(\mu)\geq k$.

For the upper bound, restrict
$
H_1(\cdot,y),H_3(\cdot,y),\ldots,H_{2k+1}(\cdot,y),
$
given by \eqref{eq:Pol}, to $(0,\infty)$ and extend them by zero to
$(-\infty,0)$. Since these polynomials vanish at zero,
their extensions belong to $W^{1,2}(\gamma_y)$.
Their span has dimension $(k+1)$ and the largest Rayleigh
quotient $(2k+1)$. The min--max principle gives
$\alpha_k(\mu)\leq2k+1$.

The forms $a_{\mu,y}[f,f]$ have a common domain for different values of $\mu$ and depend analytically
on $\mu\in(0,1)$. Simplicity of the eigenvalues and
analytic perturbation theory
\cite[Chapter~VII, Section~4]{Kato} give real analyticity.
Differentiating the eigenvalue identity (\ref{eq:EigenA_mu_y}) with the
eigenfunction normalised in $L^2(\gamma_y)$ gives
\[
\alpha_k'(\mu)
=
-\frac1{2\mu^3y}
\int_{-\infty}^0
x^2|e_k^{(\mu,y)}(x)|^2\,\gamma_y(\mathrm dx)
<0.
\]
The strict inequality follows because an eigenfunction
cannot vanish on the entire negative half-line.
Thus each eigenvalue $\alpha_k(\mu)$ is strictly decreasing  as a function of $\mu$.

As $\mu\uparrow1$, we have $c_{\mu,y}\to0$.
Applying the min--max principle to the span of
$H_0(\cdot,y),\ldots,H_k(\cdot,y)$ gives
$\limsup_{\mu\uparrow1}\alpha_k(\mu)\leq k$.
Together with the lower bound, this proves the first
limit in \eqref{eq:bounds}.

For the limit as $\mu\downarrow0$, the first $(k+1)$
normalised eigenfunctions are uniformly bounded in
$W^{1,2}(\gamma_y)$, since their eigenvalues are at most
$(2k+1)$. Along any sequence tending to zero, compactness
gives a subsequence converging strongly in
$L^2(\gamma_y)$ and weakly in $W^{1,2}(\gamma_y)$ to
functions $f_0,\ldots,f_k$. The limits remain orthonormal.
Moreover,
\[
c_{\mu,y}\int_{-\infty}^0
x^2|e_j^{(\mu,y)}(x)|^2\,\gamma_y(\mathrm dx)
\leq\alpha_j(\mu).
\]
Since $c_{\mu,y}\to\infty$, each limit vanishes on
$(-\infty,0)$ and, by continuity, has value zero at
the origin.

The limiting comparison form is
\[
f\longmapsto
y\int_0^\infty |f'(x)|^2\,\gamma_y(\mathrm dx),
\qquad
f\in W^{1,2}((0,\infty),\gamma_y),\quad f(0)=0.
\]
The self-adjoint operator  $(-y\partial_{xx}+x\partial_x)$ is associated with $A_{1,y}$ in \eqref{eq:A}, acting in
$L^2((0,\infty),\gamma_y)$ with boundary condition $f(0)=0$.
Odd extension identifies its eigenvalue problem with that
of $A_{1,y}$ restricted to the odd subspace of $L^2(\gamma_y)$.
Its eigenfunctions are therefore the restrictions of
$
H_1(\cdot,y),H_3(\cdot,y),H_5(\cdot,y),\ldots
$
to $(0,\infty)$, with respective eigenvalues $1,3,5,\ldots$.

For fixed coefficients $b_0,\ldots,b_k$, weak lower
semicontinuity gives
\[
y\int_0^\infty
\left|\sum_{j=0}^k b_jf_j'(x)\right|^2
\,\gamma_y(\mathrm dx)
\leq
\left(\lim_{\mu\downarrow0}\alpha_k(\mu)\right)
\sum_{j=0}^k|b_j|^2.
\]
Applying the min--max principle to the span of the
limits yields
\[
2k+1\leq\lim_{\mu\downarrow0}\alpha_k(\mu).
\]
The upper bound proves equality.

If $\mu<1$ and $\alpha_0(\mu)=0$, then
$a_{\mu,y}[e_0^{(\mu,y)},e_0^{(\mu,y)}]=0$ would force
$e_0^{(\mu,y)}$ to be constant and to vanish on the
negative half-line, contradicting its normalisation.
The strict upper bound $\alpha_0(\mu)<1$ follows from
strict monotonicity and \eqref{eq:bounds}.
\end{proof}

We henceforth write
$
A_\mu=A_{\mu,1}$,
$
e_k^{(\mu)}=e_k^{(\mu,1)}.
$
At $\mu=1$, we choose the signs so that
\[
e_k^{(1)}=\frac{\He_k{(x)}}{\sqrt{k!} } = \frac{H_k{(x,1)}}{\sqrt{k!} }.
\]

If $A_{\mu,y}f=\alpha f$, the matching conditions and
\eqref{eq:A} imply 
\[
f''(0^\pm)=-\frac{\alpha}{y}f(0),
\qquad
f'''(0^\pm)=\frac{1-\alpha}{y}f'(0).
\]
Thus eigenfunctions are $C^3$ across zero. These
identities follow from the eigenvalue equation \eqref{eq:EigenA_mu_y} and
do not impose additional conditions on the operator
domain.

\subsection{A Brownian interpretation of the eigenfunctions}

For $k\in\N_0$, define
\[
\Psi_k^{(\mu)}(x,t)
=
t^{\alpha_k(\mu)/2}
e_k^{(\mu)}(x/\sqrt t),
\qquad x\in\R,\quad t>0.
\]
The $C^3$-regularity of the eigenfunctions $e_k^{(\mu)}(y)$ across zero,
established at the end of subsection \ref{section: closed form, domain and spectrum},
ensures that $\Psi_k^{(\mu)}$ is continuously
differentiable in $t$ and twice continuously
differentiable in $x$ on $\R\times(0,\infty)$.

Differentiation and the eigenvalue equation
\eqref{eq:EigenA_mu_y} for $A_{\mu,y}$ with $y=1$, 
give
\[
\left(
\partial_t+\tfrac12\partial_{xx}
-\frac{c_{\mu,1}x^2}{2t^2}\ind_{\{x<0\}}
\right)\Psi_k^{(\mu)}=0.
\]

Let $W$ be a standard Brownian motion and fix $0<s<T$.
The restriction $s>0$ keeps the argument away from the
singularity at $t=0$.
Then
\[
\exp\left[
-\frac{c_{\mu,1}}2
\int_s^t
\frac{W_r^2}{r^2}\ind_{\{W_r<0\}}\,\mathrm dr
\right]
\Psi_k^{(\mu)}(W_t,t),
\qquad s\leq t\leq T,
\]
is a square-integrable martingale with respect to the
Brownian filtration.

Indeed, It\^o's formula and the product rule cancel
the finite-variation terms. 
 
More precisely, for $s\leq t\leq T$,
\[
\begin{aligned}
&\exp\left[
-\frac{c_{\mu,1}}2
\int_s^t
\frac{W_r^2}{r^2}\ind_{\{W_r<0\}}\,\mathrm dr
\right]
\Psi_k^{(\mu)}(W_t,t)
\\
&\quad=
\Psi_k^{(\mu)}(W_s,s)
+
\int_s^t
\exp\left[
-\frac{c_{\mu,1}}2
\int_s^r
\frac{W_u^2}{u^2}\ind_{\{W_u<0\}}\,\mathrm du
\right]
\partial_x\Psi_k^{(\mu)}(W_r,r)\,\mathrm dW_r.
\end{aligned}
\]

Taking into account that $c_{\mu,1}=(1-\mu^2)/(4\mu^2)\geq0$ for
$0<\mu\leq1$, the exponential factor is at most one.
Moreover, $W_t/\sqrt{t}$ has distribution $\gamma$,
so the normalisation of $e_k^{(\mu)}$ and the fact that
$e_k^{(\mu)}\in W^{1,2}(\gamma)$ give the following
estimates:

\[
 \mathbb E\int_s^T
\left|
\partial_x\Psi_k^{(\mu)}(W_t,t)
\right|^2\,\mathrm dt
=
\|(e_k^{(\mu)})'\|_{L^2(\gamma)}^2
\int_s^T t^{\alpha_k(\mu)-1}\,\mathrm dt
<\infty,
\]
since
\[
\partial_x\Psi_k^{(\mu)}(x,t)
=
t^{(\alpha_k(\mu)-1)/2}
(e_k^{(\mu)})'(x/\sqrt{t}).
\]

Also,
\[
\mathbb E\left|\Psi_k^{(\mu)}(W_t,t)\right|^2
=
t^{\alpha_k(\mu)}.
\]
Hence the initial value $\Psi_k^{(\mu)}(W_s,s)$ of the
process $\Psi_k^{(\mu)}$ is square integrable, and the It\^o
integral in the stochastic representation above is a
square-integrable martingale. This proves the asserted
martingale property.

\section{Weighted calculus and product spectral chaos}
\label{sec:chaos}

\subsection{Why the one-coordinate law changes}

The basis constructed in Theorem~\ref{thm:spectrum} gives expansions for functions
of finitely many independent Gaussian variables.
Let $g_1,\ldots,g_m$ be nonzero orthogonal real functions
in $L^2([0,T])$ and set
\[
\xi_j=\int_0^T g_j(t)\,\mathrm dW_t,
\qquad
y_j=\|g_j\|_2^2.
\]

The variables $\xi_j$ are independent and have laws 
$\gamma_{y_j}$. Therefore,   
\[
\left\{
\prod_{j=1}^m e_{k_j}^{(\mu,y_j)}(\xi_j):
(k_1,\ldots,k_m)\in\N_0^m
\right\}
\]
is an orthonormal basis of
$L^2(\sigma(\xi_1,\ldots,\xi_m))$.

To index products by sequences with finitely many nonzero
entries, we require a zero entry to indicate that the
corresponding coordinate does not appear in the function.
For the product formula to have this property, the zeroth
one-coordinate basis function must be the constant one.

For example, the random variable
$e_k^{(\mu,y_1)}(\xi_1)$ remains unchanged when a second
coordinate $\xi_2$ is introduced: its value still
depends only on $\xi_1$.
However, the index $(k,0)$ in the two-coordinate
product basis represents
$
e_k^{(\mu,y_1)}(\xi_1)e_0^{(\mu,y_2)}(\xi_2).
$
For $0<\mu<1$, the second factor is nonconstant,
so this is a different random variable.
Thus appending a zero index leaves a basis function
unchanged only when the zeroth one-coordinate
basis function is $1$.

We therefore modify the one-coordinate basis so that
its zeroth element is $1$. By the scaling in
Theorem~\ref{thm:spectrum}, it is enough to work with
variance one. Define
\begin{equation}\label{eq:ground}
\nu_\mu(\mathrm dx)
=
(e_0^{(\mu)}(x))^2\gamma(\mathrm dx),
\qquad
r_k^{(\mu)}
=
\frac{e_k^{(\mu)}}{e_0^{(\mu)}},\qquad e_k^{(\mu)}=e_k^{(\mu,1)}.
\end{equation}
The strict positivity of $e_0^{(\mu)}$ makes the quotients in \eqref{eq:ground}
well defined. Its normalisation gives $\nu_\mu(\R)=1$,
and $r_0^{(\mu)}=1$. Moreover,
\begin{equation}\label{eq:r-orthonormality}
\int_{-\infty}^\infty
r_k^{(\mu)}(x)r_j^{(\mu)}(x)\,\nu_\mu(\mathrm dx)
=
\int_{-\infty}^\infty
e_k^{(\mu)}(x)e_j^{(\mu)}(x)\,\gamma(\mathrm dx)
=
\delta_{kj}.
\end{equation}

Multiplication by $e_0^{(\mu)}$ defines a unitary map
\begin{equation}\label{eq:Gmu}
G_\mu:L^2(\nu_\mu)\longrightarrow L^2(\gamma),
\qquad
(G_\mu f)(x)=e_0^{(\mu)}(x)f(x).
\end{equation}
Since $G_\mu r_k^{(\mu)}=e_k^{(\mu)}$, the functions
$r_k^{(\mu)}$ form a complete orthonormal basis of
$L^2(\nu_\mu)$.

\begin{definition}\label{NumOp}
Define the {\bf deformed number operator}
$$
N_\mu
=
G_\mu^{-1}(A_\mu-\alpha_0(\mu)I)G_\mu,
\qquad
\lambda_k(\mu)=\alpha_k(\mu)-\alpha_0(\mu),
$$
where $A_\mu=A_{\mu,1}$, $A_{\mu,y}$ is given by \eqref{eq:A}.

Expanding the derivatives gives
\begin{equation}\label{eq:Nmu}
N_\mu f
=
-f''
+
\left(
x-2\frac{(e_0^{(\mu)})'}{e_0^{(\mu)}}
\right)f'.
\end{equation}
\end{definition}

This is a nonnegative self-adjoint operator with domain
\[
\Dom(N_\mu)
=
\left\{
f\in L^2(\nu_\mu):
e_0^{(\mu)}f\in\Dom(A_\mu)
\right\}.
\]
It satisfies
$
N_\mu r_k^{(\mu)}
=
\lambda_k(\mu)r_k^{(\mu)}$,
$
0=\lambda_0(\mu)<\lambda_1(\mu)<\cdots.
$
In particular, $N_\mu1{=}0$. The integer $k$ indexes the
basis functions, while $\lambda_k(\mu)$ is the
corresponding eigenvalue of $N_\mu$ and $\lambda_k(\mu)$ does not need to be an integer.

\subsection{The gradient and conservative diffusion}

We now identify $N_\mu$ in terms of differentiation
in $L^2(\nu_\mu)$. This gives a closed derivative,
its adjoint and the generator of the associated
Markov semigroup.

\begin{proposition}[Closed gradient and divergence]
\label{prop:gradient}
The derivative $f\mapsto f'$ on $C_c^\infty(\R)$ is
closable in $L^2(\nu_\mu)$. Its closure $D_\mu$ has domain
\[
\Dom(D_\mu)=W^{1,2}(\nu_\mu),
\]
where $W^{1,2}(\nu_\mu)=W^{1,2}(\R,\nu_\mu)$ consists
of locally absolutely continuous functions $f$ such
that $f,f'\in L^2(\nu_\mu)$, with norm
\[
\|f\|_{W^{1,2}(\nu_\mu)}^2
=
\int_{-\infty}^{\infty}
\bigl(|f(x)|^2+|f'(x)|^2\bigr)\,\nu_\mu(\mathrm dx).
\]
Furthermore,
$
\Dom(N_\mu^{1/2})=\Dom(D_\mu)$,
$
N_\mu=D_\mu^*D_\mu.
$

The divergence $\delta_\mu=D_\mu^*$ satisfies
\begin{equation}\label{eq:divergence}
\delta_\mu g
=
-g'
+
\left(
x-2\frac{(e_0^{(\mu)})'}{e_0^{(\mu)}}
\right)g
\end{equation}
for $g\in C_c^\infty(\R)$.

The domain of $N_\mu$ consists of
$f\in W^{1,2}(\nu_\mu)$ whose derivative $f'$ is locally
absolutely continuous and for which
\begin{equation}\label{eq:Ndiff}
-f''
+
\left(
x-2\frac{(e_0^{(\mu)})'}{e_0^{(\mu)}}
\right)f'
\in L^2(\nu_\mu).
\end{equation}
On this domain, the expression in \eqref{eq:Ndiff}
equals $N_\mu f$.
\end{proposition}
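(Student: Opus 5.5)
The plan is to transport everything through the unitary map $G_\mu$ of \eqref{eq:Gmu} and use the form $a_{\mu,1}$ of \eqref{eq:form}, whose domain is $W^{1,2}(\gamma)$ by Theorem~\ref{thm:spectrum}. The first step is an identity. For $f\in C_c^\infty(\R)$, put $\phi=e_0^{(\mu)}f$. Integrating by parts and using the eigenvalue equation for $e_0^{(\mu)}$ (the ground-state identity) should give
\[
a_{\mu,1}[\phi,\phi]-\alpha_0(\mu)\|\phi\|_{L^2(\gamma)}^2
=\int_{-\infty}^{\infty}|f'(x)|^2\,\nu_\mu(\mathrm dx).
\]
This works because $e_0^{(\mu)}$ is $C^1$ across zero, so no boundary term appears at the origin. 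The integration by parts is legitimate since $f$ is compactly supported and $e_0^{(\mu)}$ is piecewise $W^{2,2}$ with matched derivatives. The identity says that the closed form $q[f]=a_{\mu,1}[G_\mu f,G_\mu f]-\alpha_0\|f\|^2$ associated with $N_\mu$ agrees on $C_c^\infty$ with $\|f'\|_{L^2(\nu_\mu)}^2$.

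The second step identifies the form domain. Since $e_0^{(\mu)}$ is continuous and strictly positive, $\nu_\mu$ has a locally bounded density that is bounded below on compacts. Hence $W^{1,2}(\nu_\mu)$, and the weak derivative on it, make sense locally. The key fact needed is that $C_c^\infty(\R)$ is a core for $a_{\mu,1}$, i.e.\ dense in $W^{1,2}(\gamma)$. This was already used in proving \eqref{eq:gaussianbound}, and $G_\mu$ maps $C_c^\infty$ into compactly supported $C^1$ functions, so mollification handles that. Closability of $f\mapsto f'$ then follows: a form that is closable on a core gives a closable operator $D$ with $\|Df\|^2=q[f]$. The closure satisfies $\Dom(D_\mu)=G_\mu^{-1}W^{1,2}(\gamma)=\Dom(N_\mu^{1/2})$ and $\|D_\mu f\|=\|N_\mu^{1/2}f\|$. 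Kato's representation theorem then gives $N_\mu=D_\mu^*D_\mu$.

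It remains to show $G_\mu^{-1}W^{1,2}(\gamma)=W^{1,2}(\nu_\mu)$ with $D_\mu f=f'$. The inclusion $\subseteq$ is clear: elements are limits in the form norm of $C_c^\infty$ functions, and local convergence gives local absolute continuity with $D_\mu f=f'$. For $\supseteq$, given $f\in W^{1,2}(\nu_\mu)$, I will cut off with $\chi_R(x)=\chi(x/R)$. Then $\chi_Rf$ is compactly supported in $W^{1,2}$, hence lies in the closure after mollification. Moreover $\|(\chi_Rf)'-f'\|_{L^2(\nu_\mu)}\le\|(1-\chi_R)f'\|+R^{-1}\|\chi'\|_\infty\|f\|\to0$. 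I expect this to be the main subtle point, because weighted Sobolev spaces need not equal the closure of smooth functions. Here the cutoff argument works since the weight is locally comparable to Lebesgue measure and the cutoff error is $O(R^{-1})$ with no weight growth involved.

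The divergence formula \eqref{eq:divergence} follows by integrating by parts against $\nu_\mu=(e_0^{(\mu)})^2w_1\,\mathrm dx$. This gives $\delta_\mu g=-(\rho g)'/\rho$ with $\rho=(e_0^{(\mu)})^2w_1$, and $\rho'/\rho=2(e_0^{(\mu)})'/e_0^{(\mu)}-x$. The logarithmic derivative is continuous because $e_0^{(\mu)}$ is $C^1$ and positive. For the domain of $N_\mu$ there are two directions. If $f\in\Dom(N_\mu)$, then $f'\in\Dom(\delta_\mu)$, and testing against $C_c^\infty$ shows that $\rho f'$ has weak derivative $-\rho N_\mu f\in L^1_{\mathrm{loc}}$. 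So $f'$ is locally absolutely continuous and \eqref{eq:Ndiff} equals $N_\mu f$. Conversely, if $f'$ is locally absolutely continuous and the expression $h$ in \eqref{eq:Ndiff} lies in $L^2(\nu_\mu)$, then for $g\in C_c^\infty$ we get $\langle f',g'\rangle_{\nu_\mu}=\langle h,g\rangle_{\nu_\mu}$. This identity extends to all of $W^{1,2}(\nu_\mu)$ by the core property above, and the representation theorem gives $f\in\Dom(N_\mu)$. Alternatively, one can check directly that $e_0^{(\mu)}f$ meets the domain description of Theorem~\ref{thm:spectrum}, the derivative matching at zero being automatic from the $C^1$ regularity.
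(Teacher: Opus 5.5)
Your proposal is correct and follows essentially the same route as the paper. Both use the ground-state identity, density of $C_c^\infty(\R)$ in the transported form domain, cutoff plus local mollification to identify $W^{1,2}(\nu_\mu)$, integration by parts against $\rho=(e_0^{(\mu)})^2w_1$ for $\delta_\mu$, and the distributional equation $-(\rho f')'=\rho N_\mu f$ for $\Dom(N_\mu)$. The only real difference is that you derive closability from the closedness of the transported form on the core, whereas the paper proves it directly by testing against $\phi\in C_c^\infty(\R)$ using local boundedness of $\rho'/\rho$.

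One small correction: to pass from the core $G_\mu^{-1}C_c^\infty(\R)$ to $C_c^\infty(\R)$ itself, you need that division by $e_0^{(\mu)}$ (i.e.\ $G_\mu^{-1}$, not $G_\mu$) maps $C_c^\infty(\R)$ into compactly supported $C^1$ functions, which you then mollify.
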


\begin{proof}
Write
$
m_\mu(x)=(e_0^{(\mu)}(x))^2w_1(x)$,
$
\nu_\mu(\mathrm dx)=m_\mu(x)\,\mathrm dx.
$
The density is positive and continuously differentiable.
The functions $m_\mu$, $m_\mu^{-1}$ and $m_\mu'/m_\mu$
are bounded on compact intervals.

Suppose $f_n\in C_c^\infty(\R)$,
$f_n\to0$ and $f_n'\to g$ in $L^2(\nu_\mu)$.
For every $\phi\in C_c^\infty(\R)$, integration by parts
gives
\[
\langle g,\phi\rangle_{L^2(\nu_\mu)}
=
\lim_{n\to\infty}
\langle f_n',\phi\rangle_{L^2(\nu_\mu)}
=
\lim_{n\to\infty}
\left\langle
f_n,-\phi'-\frac{m_\mu'}{m_\mu}\phi
\right\rangle_{L^2(\nu_\mu)}
=0.
\]
Thus $g=0$, proving closability. Cutoff functions and
local mollification show that $C_c^\infty(\R)$ is dense
in $W^{1,2}(\nu_\mu)$ in the displayed Sobolev norm.
This identifies the domain of the closed derivative.

Using
$A_\mu e_0^{(\mu)}=\alpha_0(\mu)e_0^{(\mu)}$,
expanding derivatives and integrating by parts gives,
for compactly supported
$f,g\in W^{1,2}(\R,\mathrm dx)$,
\begin{equation}\label{eq:groundidentity}
a_{\mu,1}[e_0^{(\mu)}f,e_0^{(\mu)}g]
-
\alpha_0(\mu)
\langle e_0^{(\mu)}f,e_0^{(\mu)}g\rangle_{L^2(\gamma)}
= \int_{-\infty}^{\infty}
f'(x)\overline{g'(x)}\,\nu_\mu(\mathrm dx).
\end{equation}
The left-hand side is the closed form of $N_\mu$,
obtained by unitary conjugation.

Compactly supported smooth functions are dense in
this form domain. Indeed, if
$e_0^{(\mu)}f\in W^{1,2}(\gamma)$, choose
$u_n\in C_c^\infty(\R)$ converging to
$e_0^{(\mu)}f$ in the original form norm.
Then $u_n/e_0^{(\mu)}$ converges to $f$ in the form norm
of $N_\mu$. Each quotient has compact support and
belongs to $W^{1,2}(\R,\mathrm dx)$.
Since the density is bounded above and below on compact
intervals, local mollification approximates these
quotients by compactly supported smooth functions in
the weighted Sobolev norm.
Identity \eqref{eq:groundidentity} gives the same
approximation in the form norm of $N_\mu$.

Consequently, the form of $N_\mu$ is the closed
derivative form
\[
(f,g)\longmapsto
\langle D_\mu f,D_\mu g\rangle_{L^2(\nu_\mu)}.
\]
This proves $N_\mu=D_\mu^*D_\mu$ and
$\Dom(N_\mu^{1/2})=\Dom(D_\mu)$.

Integration by parts gives
\[
D_\mu^*g=-g'-\frac{m_\mu'}{m_\mu}g
\]
for compactly supported smooth $g$. Since
\[
\frac{m_\mu'}{m_\mu}
=
2\frac{(e_0^{(\mu)})'}{e_0^{(\mu)}}-x,
\]
we obtain \eqref{eq:divergence}.

Finally, the form identity for $N_\mu f=h$ gives
$-(m_\mu f')'=m_\mu h$
in distributions. Hence $m_\mu f'$, and therefore
$f'$, is locally absolutely continuous.
Division by $m_\mu$ gives \eqref{eq:Ndiff}.
Conversely, the stated regularity and integrability
conditions imply the form identity for compactly
supported smooth tests and then, by density, for
all $g\in W^{1,2}(\nu_\mu)$. This proves the domain
description.
\end{proof}

The derivative form has the Markov contraction property:
replacing a real function $f$ by $(0\vee f)\wedge1$ does
not increase the integral of its squared derivative.
Consequently,
\[
e^{-tN_\mu/2}
=
e^{\alpha_0(\mu)t/2}
G_\mu^{-1}e^{-tA_\mu/2}G_\mu
\]
is a symmetric Markov semigroup. Since $N_\mu1=0$,
$e^{-tN_\mu/2}1=1.$
 
Thus the semigroup $(e^{-tN_\mu/2})_{t\geq0}$ is
conservative, and $\nu_\mu$ is invariant and
reversible for this semigroup.

Its generator is
\[
-\tfrac12N_\mu
=
\tfrac12\partial_{xx}
+
\left(
\frac{(e_0^{(\mu)})'}{e_0^{(\mu)}}-\frac x2
\right)\partial_x.
\]

The semigroup $(e^{-tN_\mu/2})_{t\geq0}$ is the
transition semigroup of the diffusion
$(X_t)_{t\geq0}$ solving
\[
\mathrm dX_t
=
\mathrm dW_t
+
\left(
\frac{(e_0^{(\mu)})'(X_t)}{e_0^{(\mu)}(X_t)}
-\frac{X_t}{2}
\right)\mathrm dt.
\]

The drift coefficient is the function
\[
x\longmapsto
\frac{(e_0^{(\mu)})'(x)}{e_0^{(\mu)}(x)}-\frac{x}{2}.
\]

Recall that $e_0^{(\mu)}$ is the strictly positive,
$L^2(\gamma)$-normalised eigenfunction corresponding
to $\alpha_0(\mu)$. Its parabolic-cylinder representation is
\[
e_0^{(\mu)}(x)
=
e_0^{(\mu)}(0)e^{x^2/4}
\begin{cases}
\dfrac{D_{\alpha_0(\mu)}(x)}
      {D_{\alpha_0(\mu)}(0)},&x\geq0,\\[8pt]
\dfrac{D_{\beta_\mu(\alpha_0(\mu))}(-x/\sqrt{\mu})}
      {D_{\beta_\mu(\alpha_0(\mu))}(0)},&x<0,
\end{cases}
\]
where $\beta_\mu(a)=\mu a-(1-\mu)/2$.
The positive constant $e_0^{(\mu)}(0)$ is determined
by $\|e_0^{(\mu)}\|_{L^2(\gamma)}=1$ and cancels
in the ratio $(e_0^{(\mu)})'/e_0^{(\mu)}$.

Since $e_0^{(\mu)}$ is strictly positive and $C^3$,
this function has a continuous derivative, bounded
on every compact interval. It is therefore locally
Lipschitz.

 The parabolic-cylinder derivative
identities and asymptotic formulas
\cite[Sections~12.8--12.9]{DLMF} give
\[
\frac{(e_0^{(\mu)})'(x)}{e_0^{(\mu)}(x)}-\frac x2
=
\begin{cases}
-x/2+O(x^{-1}),&x\to+\infty,\\
-x/(2\mu)+O(|x|^{-1}),&x\to-\infty.
\end{cases}
\]
The drift has at most linear growth, so the stochastic
differential equation has a unique nonexplosive solution.

The relation between closed derivative forms and
symmetric Markov semigroups is standard
\cite[Chapter~1]{Fukushima}. The preceding calculations
identify the derivative, its adjoint and the operator
domain for $\nu_\mu$, giving a weighted
gradient--divergence calculus within the
differentiable-measure framework
\cite{Bogachev,DaPrato}. At $\mu=1$, we have
$e_0^{(1)}=1$ and $\nu_1=\gamma$, so the construction
reduces to Gaussian calculus. For $\mu<1$,
$\delta_\mu$ is defined as the adjoint in
$L^2(\nu_\mu)$; its identification with a Wiener-space
Skorohod integral is not part of this construction.

\subsection{The product gradient and spectral decomposition}

Equip $\R^\N$ with its product Borel sigma-algebra
and probability measure
$\mathbb P_\mu{\,=\,}\nu_\mu^{\otimes\N}.$
Let $X_j(x)=x_j$ be the coordinate maps.
For a multi-index $\mathbf k{\,=\,}(k_j)_{j\geq1}$ with
$k_j\in\N_0$ and only finitely many nonzero entries,
define
\begin{equation}\label{eq:prod}
R_{\mathbf k}^{(\mu)}
=
\prod_{j\geq1}r_{k_j}^{(\mu)}(X_j),
\qquad
\Lambda_{\mathbf k}(\mu)
=
\sum_{j\geq1}\lambda_{k_j}(\mu).
\end{equation}

In \eqref{eq:prod}, the product contains only finitely
many factors different from one, and the sum contains
only finitely many nonzero terms, because
$\mathbf k$ has finite lenght,
$r_0^{(\mu)}=1$ and $\lambda_0(\mu)=0$.

On the finite linear span of the functions
$R_{\mathbf k}^{(\mu)}$, define
\[
(\mathbf D_\mu^0R_{\mathbf k}^{(\mu)})_j
=
(r_{k_j}^{(\mu)})'(X_j)
\prod_{i\neq j}r_{k_i}^{(\mu)}(X_i).
\]
 The superscript $0$ in $\mathbf D_\mu^0$ distinguishes the gradient
initially defined on finite linear combinations
of the product basis functions from its closure,
denoted below by $\mathbf D_\mu$.

The coordinate gradient $\mathbf D_\mu^0$ takes values in
$L^2(\mathbb P_\mu;\ell^2)$, whose norm is
\[
\|V\|_{L^2(\mathbb P_\mu;\ell^2)}^2
=
\int_{\R^\N}
\sum_{j\geq1}|V_j|^2\,\mathrm d\mathbb P_\mu.
\]

\begin{theorem}[Basis and decomposition]
\label{thm:product}
The functions $R_{\mathbf k}^{(\mu)}$ form an
orthonormal basis of $L^2(\mathbb P_\mu)$.

For $F\in L^2(\mathbb P_\mu)$,
the next decomposition is valid
\[
F=\sum_{\mathbf k}
F_{\mathbf k}R_{\mathbf k}^{(\mu)}
\qquad\text{in }\qquad L^2(\mathbb P_\mu),
\]
where
\[
F_{\mathbf k}
=
\langle F,R_{\mathbf k}^{(\mu)}
\rangle_{L^2(\mathbb P_\mu)},\qquad \|F\|_{L^2(\mathbb P_\mu)}^2
=
\sum_{\mathbf k}|F_{\mathbf k}|^2.
\]

The operator $\mathbf D_\mu^0$ is closable.
Its closure $\mathbf D_\mu$ has domain
\[
\Dom(\mathbf D_\mu)
=
\left\{
F\in L^2(\mathbb P_\mu):
\sum_{\mathbf k}
\Lambda_{\mathbf k}(\mu)|F_{\mathbf k}|^2<\infty
\right\}.
\]
The nonnegative self-adjoint operator
$
\mathbf N_\mu=\mathbf D_\mu^*\mathbf D_\mu
$
has domain
\[
\Dom(\mathbf N_\mu)
=
\left\{
F\in L^2(\mathbb P_\mu):
\sum_{\mathbf k}
\Lambda_{\mathbf k}(\mu)^2|F_{\mathbf k}|^2<\infty
\right\}.
\]
Moreover,
\[
\|\mathbf D_\mu F\|_{L^2(\mathbb P_\mu;\ell^2)}^2
=
\sum_{\mathbf k}
\Lambda_{\mathbf k}(\mu)|F_{\mathbf k}|^2,
\]
and
\[
\mathbf N_\mu F
=
\sum_{\mathbf k}
\Lambda_{\mathbf k}(\mu)F_{\mathbf k}
R_{\mathbf k}^{(\mu)},
\qquad
F\in\Dom(\mathbf N_\mu).
\]
In particular,
$
\mathbf N_\mu R_{\mathbf k}^{(\mu)}
=
\Lambda_{\mathbf k}(\mu)R_{\mathbf k}^{(\mu)}.
$
\end{theorem}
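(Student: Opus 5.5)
The proof has three steps: completeness of the product basis, a direct computation of the gradient on the basis, and identification of the closure with the spectral domain.

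\emph{Completeness.} Orthonormality of the $R_{\mathbf k}^{(\mu)}$ follows from independence of the coordinates under $\mathbb P_\mu$ together with \eqref{eq:r-orthonormality}. Each $R_{\mathbf k}^{(\mu)}$ depends on only finitely many coordinates, because $r_0^{(\mu)}=1$. For completeness, I first use that $\{r_k^{(\mu)}\}$ is an orthonormal basis of $L^2(\nu_\mu)$, which holds because $G_\mu$ is unitary. The standard tensor-product argument then shows that the products with $k_j=0$ for $j>m$ form an orthonormal basis of $L^2(\nu_\mu^{\otimes m})$. These products span the closed subspace of $L^2(\mathbb P_\mu)$ of functions measurable with respect to $\sigma(X_1,\dots,X_m)$. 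Next, $\bigcup_m L^2(\sigma(X_1,\dots,X_m))$ is dense in $L^2(\mathbb P_\mu)$. This follows from martingale convergence $\E[F\mid X_1,\dots,X_m]\to F$, or from a monotone class argument on cylinder sets. Completeness follows, and the expansion and Parseval identity are then immediate.

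\emph{Gradient on the basis.} For $F=\sum_{\mathbf k}F_{\mathbf k}R_{\mathbf k}^{(\mu)}$ a finite sum, I compute $\|\mathbf D_\mu^0F\|^2$ coordinatewise. By Proposition~\ref{prop:gradient}, $\langle (r_k^{(\mu)})',(r_l^{(\mu)})'\rangle_{L^2(\nu_\mu)}=\langle N_\mu^{1/2}r_k^{(\mu)},N_\mu^{1/2}r_l^{(\mu)}\rangle=\lambda_k(\mu)\delta_{kl}$. Fix a coordinate $j$. The components $(\mathbf D_\mu^0R_{\mathbf k}^{(\mu)})_j$ are orthogonal across distinct $\mathbf k$: in coordinate $j$ this uses the identity just stated, and in the coordinates $i\neq j$ it uses orthonormality of the $r$'s. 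Each such component has squared norm $\lambda_{k_j}(\mu)$. Summing over $j$ gives
\[
\|\mathbf D_\mu^0F\|^2=\sum_{\mathbf k}\Lambda_{\mathbf k}(\mu)|F_{\mathbf k}|^2.
\]
Closability then follows. If $F_n\to0$ and $\mathbf D_\mu^0F_n\to V$, then each coefficient $(F_n)_{\mathbf k}\to0$. Fatou's lemma applied to the displayed isometry gives $\|V\|^2\le\liminf_n\|\mathbf D^0_\mu(F_n-F_m)\|^2$, which is small, so $V=0$.

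\emph{Closure and the number operator.} By the same isometry, the graph norm is $\sum(1+\Lambda_{\mathbf k})|F_{\mathbf k}|^2$. Hence the closure domain is exactly the set of $F$ with $\sum\Lambda_{\mathbf k}|F_{\mathbf k}|^2<\infty$: truncations of $F$ are Cauchy in this norm, and conversely the isometry passes to limits. Now the closed form $\|\mathbf D_\mu F\|^2$ coincides with the form of the diagonal operator $\sum\Lambda_{\mathbf k}F_{\mathbf k}R_{\mathbf k}^{(\mu)}$ on the same domain. By the uniqueness part of the representation theorem \cite[Chapter~VI]{Kato}, $\mathbf D_\mu^*\mathbf D_\mu$ equals this diagonal operator, whose domain is given by the condition $\sum\Lambda_{\mathbf k}^2|F_{\mathbf k}|^2<\infty$.

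I expect the main obstacle to be the density step in the completeness argument, namely ensuring that finite-coordinate functions exhaust $L^2$ of the infinite product. The martingale convergence argument handles it cleanly.
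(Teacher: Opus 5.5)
Your proposal is correct and follows the paper's proof step for step. The shared steps are orthonormality from independence, completeness via conditional expectations on $\sigma(X_1,\dots,X_m)$ and finite tensor-product bases, the coordinatewise identity $\langle \mathbf D_\mu^0R_{\mathbf k}^{(\mu)},\mathbf D_\mu^0R_{\mathbf l}^{(\mu)}\rangle=\Lambda_{\mathbf k}(\mu)\delta_{\mathbf k\mathbf l}$, and identification of $\mathbf D_\mu^*\mathbf D_\mu$ through the diagonal closed form.

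The only variation is in closability. The paper notes that $V$ is orthogonal to every $\mathbf D_\mu^0R_{\mathbf k}^{(\mu)}$ while lying in their closed span, whereas you use Fatou. Your displayed inequality has a slip: its left side should be $\|\mathbf D_\mu^0F_m\|^2$, giving $\|\mathbf D_\mu^0F_m\|^2\le\liminf_n\|\mathbf D_\mu^0(F_n-F_m)\|^2=\|V-\mathbf D_\mu^0F_m\|^2$. Letting $m\to\infty$ then gives $V=0$.
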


\begin{proof}
Independence under
$\mathbb P_\mu=\nu_\mu^{\otimes\N}$
and the one-coordinate orthonormality relation
\eqref{eq:r-orthonormality} give
$
\langle R_{\mathbf k}^{(\mu)},
R_{\mathbf l}^{(\mu)}\rangle_{L^2(\mathbb P_\mu)}
=
\delta_{\mathbf k\mathbf l}.
$ For every $F\in L^2(\mathbb P_\mu)$, the conditional
expectations given $X_1,\ldots,X_m$ converge to $F$
in $L^2$. Each finite-coordinate space has the
corresponding tensor-product basis. This proves
completeness and the expansion formula.

The identity $N_\mu=D_\mu^*D_\mu$ gives
$
\langle D_\mu r_a^{(\mu)},
D_\mu r_b^{(\mu)}\rangle_{L^2(\nu_\mu)}
=
\lambda_b(\mu)\delta_{ab}.
$
Applying this identity in each coordinate yields
\[
\left\langle
\mathbf D_\mu^0R_{\mathbf k}^{(\mu)},
\mathbf D_\mu^0R_{\mathbf l}^{(\mu)}
\right\rangle_{L^2(\mathbb P_\mu;\ell^2)}
=
\Lambda_{\mathbf k}(\mu)\delta_{\mathbf k\mathbf l}.
\]
Hence, for every finite expansion,
\[
\|\mathbf D_\mu^0F\|_{L^2(\mathbb P_\mu;\ell^2)}^2
=
\sum_{\mathbf k}
\Lambda_{\mathbf k}(\mu)|F_{\mathbf k}|^2.
\]

Suppose finite expansions $F_n$ converge to zero in
$L^2(\mathbb P_\mu)$ and
$\mathbf D_\mu^0F_n$ converges to $V$.
Every fixed coefficient of $F_n$ tends to zero.
The preceding orthogonality identity therefore makes
$V$ orthogonal to each
$\mathbf D_\mu^0R_{\mathbf k}^{(\mu)}$.
Since $V$ also belongs to their closed linear span,
we have $V=0$. This proves closability.

Completion in the graph norm gives the domain and
norm identity for $\mathbf D_\mu$.
The corresponding closed form is diagonal in the
basis $R_{\mathbf k}^{(\mu)}$, with diagonal entries
$\Lambda_{\mathbf k}(\mu)$. Its associated operator
is $\mathbf D_\mu^*\mathbf D_\mu$, which gives the
stated domain and action of $\mathbf N_\mu$. The proof is complete. \end{proof} 
 
We call $\mathbf N_\mu$ the {\bf product number operator}.
We now construct the main object of our study: the product spectral chaos by grouping
the product basis functions according to their
eigenvalues under $\mathbf N_\mu$.
Let 
$
S_\mu
=
\{\Lambda_{\mathbf k}(\mu):
\mathbf k\text{ has finite support}\}
$
be the set of distinct eigenvalues.
Every bounded interval contains only finitely many
elements of $S_\mu$. A sum at most $R$ contains at
most $\lfloor R/\lambda_1(\mu)\rfloor$ positive
summands, each taken from the finite set of
one-coordinate eigenvalues not exceeding $R$.

\begin{definition}
Grouping basis functions with the same eigenvalue gives
the orthogonal decomposition
\begin{equation}\label{eq:chaos}
L^2(\mathbb P_\mu)
=
\bigoplus_{\Lambda\in S_\mu}\mathcal C_\Lambda^{(\mu)},\quad
\mathcal C_\Lambda^{(\mu)}
=
\overline{\Span}
\left\{
R_{\mathbf k}^{(\mu)}:
\Lambda_{\mathbf k}(\mu)=\Lambda
\right\}
=
\ker(\mathbf N_\mu-\Lambda I),
\end{equation}
where the closure is taken in $L^2(\mathbb P_\mu)$.
We call \eqref{eq:chaos} the {\bf product spectral chaos}.
\end{definition}

The subspaces are indexed by eigenvalues $\Lambda$,
which need not be integers. This index differs from
the number of nonzero entries of $\mathbf k$.
Every positive eigenvalue has infinite multiplicity:
moving a fixed finite pattern of nonzero indices
to different coordinates produces infinitely many
orthogonal basis functions with the same eigenvalue.
The eigenspace for zero is
$
\mathcal C_0^{(\mu)}
=
\{c\,1:c\in\C\}.
$

At $\mu=1$, we have $\lambda_k(1)=k$.
The subspace with eigenvalue $n$ is the closed span
of the Hermite products of total degree
$
\sum_{j\geq1}k_j=n.
$
Thus \eqref{eq:chaos} is the usual Wiener chaos
decomposition on the Gaussian coordinate space.

For $\mu<1$, the construction is relative to the
chosen coordinates. In particular, $\nu_\mu$ is not
invariant under $x\mapsto-x$. Otherwise its positive
density relative to $\gamma$ would make
$e_0^{(\mu)}$ even. Comparing its differential
equation at $x$ and $-x$ would then give
$
c_{\mu,1}x^2e_0^{(\mu)}(x)=0$, $x>0$,
contrary to $c_{\mu,1}>0$ and $e_0^{(\mu)}>0$.
Thus invariance under arbitrary orthogonal changes
of Gaussian coordinates is not retained.
The decomposition for $\mu<1$ is defined through
the eigenspaces of $\mathbf N_\mu$, without a
representation by multiple Wiener integrals.

\subsection{The relation to Gaussian chaos}

For $0<\mu<1$, the measure $\nu_\mu(\mathrm dx)$ is non-Gaussian.
Indeed, if its density were Gaussian, then
\eqref{eq:ground} would imply
$
e_0^{(\mu)}(x)=C\exp(ax^2+bx)
$
for real $a,b$ and $C>0$.
Substitution into
$A_\mu e_0^{(\mu)}=\alpha_0(\mu)e_0^{(\mu)}$
on $x>0$ gives
\[
2a-4a^2=0,
\qquad
b-4ab=0,
\qquad
\alpha_0(\mu)=-2a-b^2.
\]
The choice $a=1/2$ is incompatible with
$e_0^{(\mu)}\in L^2(\gamma)$.
The remaining choice $a=0$ forces $b=0$ and
$\alpha_0(\mu)=0$, contradicting
Proposition~\ref{prop:limits}.

The one-coordinate measures $\nu_\mu(\mathrm dx)$ and $\gamma(\mathrm dx)$
are mutually absolutely continuous because their
density ratio is strictly positive. Their countable
products are mutually singular.

\begin{proposition}[Singularity of the countable product law]
\label{prop:singular}
For $0{\,<\,}\mu{\,<\,}1$,
\[
\nu_\mu^{\otimes\N}\perp\gamma^{\otimes\N}.
\]
Moreover, the sequence
\[
\prod_{j=1}^m e_0^{(\mu)}(X_j)
\]
does not converge in $L^2(\gamma^{\otimes\N})$,
where $X_j$ is the $j$th coordinate map on $\R^\N$.
\end{proposition}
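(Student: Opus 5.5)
The plan is to apply Kakutani's dichotomy for infinite product measures, computing the Hellinger affinity of the one-coordinate factors. Since $\nu_\mu$ has density $(e_0^{(\mu)})^2$ with respect to $\gamma$, the affinity is
\[
\theta_\mu=\int_{-\infty}^{\infty}\Bigl(\frac{\mathrm d\nu_\mu}{\mathrm d\gamma}\Bigr)^{1/2}\mathrm d\gamma
=\int_{-\infty}^{\infty}e_0^{(\mu)}(x)\,\gamma(\mathrm dx)
=\langle e_0^{(\mu)},1\rangle_{L^2(\gamma)} .
\]
Here I use the strict positivity of $e_0^{(\mu)}$ from Theorem~\ref{thm:spectrum}. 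By Cauchy--Schwarz and the normalisation $\|e_0^{(\mu)}\|_{L^2(\gamma)}=\|1\|_{L^2(\gamma)}=1$, we have $0<\theta_\mu\le1$. Equality would force $e_0^{(\mu)}$ to be a positive multiple of $1$, hence $e_0^{(\mu)}=1$. But $A_\mu1=c_{\mu,1}x^2\ind_{\{x<0\}}$ is not a multiple of $1$ when $\mu<1$. Equivalently, this would give $\alpha_0(\mu)=0$, contradicting Proposition~\ref{prop:limits}. Hence $\theta_\mu<1$.

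For identical factors, Kakutani's theorem states that $\nu_\mu^{\otimes\N}$ and $\gamma^{\otimes\N}$ are either equivalent or mutually singular, according to whether $\prod_j\theta_\mu=\lim_m\theta_\mu^m$ is positive or zero. Since $\theta_\mu^m\to0$, singularity follows. To keep the argument self-contained, I can also give a direct proof. Set $M_m=\prod_{j\le m}e_0^{(\mu)}(X_j)$. By independence, $M_m^2$ is the Radon--Nikodym density of the first $m$ marginals, and
\[
\E_{\gamma^{\otimes\N}}\bigl[M_m\bigr]=\theta_\mu^m\to0 .
\]
Then $\E_{\gamma^{\otimes\N}}[M_m^{2}\wedge 1]\le\E[M_m]$, and Chebyshev-type bounds give sets $B_m=\{M_m^2\ge1\}$, measurable with respect to the first $m$ coordinates, with
\[
\gamma^{\otimes\N}(B_m)\le\theta_\mu^m,
\qquad
\nu_\mu^{\otimes\N}(B_m^c)=\E_\gamma\bigl[M_m^2\ind_{\{M_m<1\}}\bigr]\le\theta_\mu^m .
\]
Along a subsequence with $\sum\theta_\mu^{m}<\infty$, Borel--Cantelli shows that $B=\limsup B_m$ has $\gamma^{\otimes\N}$-measure $0$. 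By the same argument applied to the $B_m^c$, $B$ has full $\nu_\mu^{\otimes\N}$-measure. This proves singularity.

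For the non-convergence claim, I compute Gram quantities directly. Each $M_m$ has norm one in $L^2(\gamma^{\otimes\N})$. For $m<n$, independence gives
\[
\langle M_m,M_n\rangle=\prod_{j\le m}\|e_0^{(\mu)}\|^2\prod_{m<j\le n}\theta_\mu=\theta_\mu^{\,n-m},
\]
so $\|M_n-M_m\|^2=2-2\theta_\mu^{n-m}\to2$ as $n\to\infty$ with $m$ fixed. Hence the sequence is not Cauchy, and it does not converge. Alternatively, it converges weakly to $0$, since inner products with cylinder functions vanish in the limit, while all norms equal $1$.

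The only genuine point is the strict inequality $\theta_\mu<1$, that is, that $e_0^{(\mu)}$ is not constant. This follows immediately from the earlier results, so I expect no real obstacle. The Borel--Cantelli step is needed only if one prefers not to quote Kakutani.
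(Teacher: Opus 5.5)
Your proof is correct, but it reaches singularity by a different route from the paper's.

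\textbf{The paper's route.} It chooses any Borel set $B$ with $\nu_\mu(B)\neq\gamma(B)$. It then applies the strong law of large numbers to $\frac1m\sum_{j\le m}\ind_B(X_j)$ under each product law. The two almost-sure limits differ, so the corresponding full-measure sets are disjoint.

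\textbf{Your route.} You pass through the Hellinger affinity $\theta_\mu=\langle e_0^{(\mu)},1\rangle_{L^2(\gamma)}<1$. You then use either Kakutani's dichotomy or the likelihood-ratio sets $B_m=\{M_m\ge1\}$ combined with Borel--Cantelli. Since $\sum_m\theta_\mu^m$ is geometric, no subsequence is needed there.

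\textbf{Comparison.} The paper's argument is shorter and uses only $\nu_\mu\neq\gamma$, not the density structure. Yours is quantitative. The sets $B_m$ depend only on the first $m$ coordinates, and they separate the two laws with both error probabilities at most $\theta_\mu^m$. It also shows that the single number $\theta_\mu$ governs both claims of the proposition.

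For non-convergence you are essentially doing the paper's computation. The paper evaluates only consecutive differences, $\|M_{m+1}-M_m\|^2=2-2\theta_\mu$. Your full Gram identity $\langle M_m,M_n\rangle=\theta_\mu^{\,n-m}$ additionally gives weak convergence to $0$.

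You also make the nonconstancy of $e_0^{(\mu)}$ explicit, via $\alpha_0(\mu)>0$ or $A_\mu 1\neq c\,1$. The paper invokes this without separate justification at this point, both to find $B$ and to get the strict inequality.
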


\begin{proof}
Choose a Borel set $B$ such that
$\nu_\mu(B)\neq\gamma(B)$.
Under $\nu_\mu^{\otimes\N}$, the strong law gives
\[
\frac1m\sum_{j=1}^m\ind_B(X_j)
\longrightarrow\nu_\mu(B)
\qquad\text{almost surely}.
\]
Under $\gamma^{\otimes\N}$, the same averages converge
almost surely to $\gamma(B)$.
The limits differ, so the corresponding full-measure
sets are disjoint. This proves mutual singularity.

Independence and
$\|e_0^{(\mu)}\|_{L^2(\gamma)}=1$ also give
\[
\begin{aligned}
&\left\|
\prod_{j=1}^{m+1}e_0^{(\mu)}(X_j)
-
\prod_{j=1}^{m}e_0^{(\mu)}(X_j)
\right\|_{L^2(\gamma^{\otimes\N})}^2
=
2-2\int_{-\infty}^{\infty}
e_0^{(\mu)}(x)\,\gamma(\mathrm dx)
>0.
\end{aligned}
\]
The strict inequality follows from the Cauchy--Schwarz inequality and
since $e_0^{(\mu)}$ is positive, normalised and
nonconstant. The right-hand side is independent of
$m$, so the sequence is not Cauchy.
\end{proof}

The finite-coordinate unitary multiplication maps
therefore do not extend by taking an $L^2$ limit of
their multipliers on the Gaussian product space.
A separate measurable change of coordinates between
the probability spaces remains possible.  

\section{Rodrigues formulas on the spectral branches}
\label{sec:rodrigues}

We return to the one-coordinate eigenfunctions
$e_k^{(\mu,y)}$ of Theorem~\ref{thm:spectrum}.
Their branch representation \eqref{eq:ansatz} involves the
orders $\alpha_k(\mu)$ and $\beta_\mu(\alpha_k(\mu))$.
Recall the affine relation \eqref{eq:order}:
\[
\beta_\mu(\alpha)=\mu\alpha-\frac{1-\mu}{2}.
\]
We ask whether the fractional analog of the Rodrigues formula, applied to these factors, can yield another matched eigenfunction.
Such operators shift the order of a special function from $a$ to $a+s$.
We first establish this formula and its domain,
including negative initial orders, and then show why equal
nonzero changes of the two branch orders fail when $0<\mu<1$.

Throughout this section, $L^2=L^2(\R,\mathrm dx)$.
We use
\[
\widehat h(\xi)=\int_{\R}e^{-ix\xi}h(x)\,\mathrm dx,
\]
initially for integrable functions and then extended to
$L^2$ by Plancherel's theorem. For real $s$, define
\begin{equation}\label{eq:FROP}
\begin{aligned}
\mathcal D_\pm^s h
&=\mathcal F^{-1}\!\left[(\pm i\xi)^s\widehat h(\xi)\right],
\\
\Dom(\mathcal D_\pm^s)
&=\left\{h\in L^2:
(\pm i\xi)^s\widehat h(\xi)\in L^2\right\},
\end{aligned}
\end{equation}
where
$(\pm i\xi)^s
=|\xi|^s e^{\pm i\pi s\operatorname{sgn}(\xi)/2}$,
$\xi\neq0.$
These closed multipliers give the $L^2$ realisations of the
left- and right-sided Liouville operators, respectively.
For negative $s$, the singularity of the symbol near zero
makes the domain restriction essential.

For comparison with the classical definitions
\cite{Samko}, p. 94, the Liouville fractional integrals are
\[
(I_\pm^s\varphi)(x)
=\frac{1}{\Gamma(s)}
\int_0^\infty t^{s-1}\varphi(x\mp t)\,\mathrm dt,
\qquad s>0,
\]
whenever the integral converges. For $\varphi$ in the
Schwartz space $\mathcal S(\R)$ and $0<s<1$,
\begin{equation}\label{eq:FRIn}
\widehat{I_\pm^s\varphi}(\xi)
=(\pm i\xi)^{-s}\widehat\varphi(\xi)
\end{equation}
in tempered distributions, and in $L^2$ when the right-hand
side belongs to $L^2$. For $\varphi\in\mathcal S(\R)$,
$n\in\N$ and $n-1<s<n$, the corresponding derivatives are \cite{Samko}, p. 95
\begin{equation}\label{eq:FRDer}
(\mathcal D_\pm^s\varphi)(x)
=\frac{(\pm1)^n}{\Gamma(n-s)}
\frac{\mathrm d^n}{\mathrm dx^n}
\int_0^\infty t^{n-s-1}\varphi(x\mp t)\,\mathrm dt.
\end{equation}
At integer orders,
$\mathcal D_\pm^n=(\pm1)^n\mathrm d^n/\mathrm dx^n$,
and $\mathcal D_\pm^0=I$; for $s<0$,
$\mathcal D_\pm^s=I_\pm^{-s}$ on their common domain.

Using the Gaussian density $w_y$ defined by \eqref{GD}, we denote by $\mathcal R_y^s$ the next operator
\[
\mathcal R_y^sf
=y^sw_y^{-1}\mathcal D_-^s(w_yf),
\qquad w_yf\in\Dom(\mathcal D_-^s).
\]
The domain condition concerns $w_yf$ in  $L^2$.
At order one, this gives  
\[
\mathcal R_y^1f
=-yw_y^{-1}(w_yf)'=xf-yf'.
\]
The following proposition gives the action of
$\mathcal R_y^s$ on the functions $H_a$ defined in
\eqref{eq:H}, for real $s$.

\begin{proposition}[Rodrigues formula]\label{prop:rodrigues}
For real $a>-1/2$,
\begin{equation}\label{eq:Rodrigues}
w_yH_a(\cdot,y)=y^a\mathcal D_-^aw_y,
\end{equation}
where $\mathcal D_-^a$ is defined by \eqref{eq:FROP}.
Thus $\mathcal R_y^a1=H_a(\cdot,y)$, and for   $s$
with $a+s>-1/2$,
\[
\mathcal R_y^sH_a(\cdot,y)=H_{a+s}(\cdot,y).
\]
For $a,s,t\in\R$ satisfying
\[
a>-\tfrac12,\qquad
a+s>-\tfrac12,\qquad
a+s+t>-\tfrac12,
\]
the successive operations satisfy
\[
\mathcal R_y^t\bigl(\mathcal R_y^sH_a(\cdot,y)\bigr)
=\mathcal R_y^{s+t}H_a(\cdot,y)
=H_{a+s+t}(\cdot,y).
\]
Here the conditions apply to the initial order $a$,
the intermediate order $a+s$ and the final order $a+s+t$.
\end{proposition}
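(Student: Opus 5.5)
The plan is to verify \eqref{eq:Rodrigues} on the Fourier side, where $\mathcal D_-^a$ acts by multiplication. Everything else then follows from the multiplicativity of the symbols $(-i\xi)^s$.

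\textbf{Step 1: domain.} We have $\widehat{w_y}(\xi)=e^{-y\xi^2/2}$. Hence
\[
\bigl|(-i\xi)^a\widehat{w_y}(\xi)\bigr|^2=|\xi|^{2a}e^{-y\xi^2}.
\]
This is integrable at infinity for every real $a$, and integrable near $\xi=0$ exactly when $2a>-1$. So $w_y\in\Dom(\mathcal D_-^a)$ if and only if $a>-\tfrac12$. This explains the threshold in the statement. For $a>-1/2$ the multiplier function is also in $L^1$, so the inverse transform can be computed pointwise.

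\textbf{Step 2: compute $\mathcal D_-^a w_y$.} Using $(-i\xi)^a=|\xi|^ae^{-i\pi a\operatorname{sgn}(\xi)/2}$ and pairing $\xi$ with $-\xi$,
\[
(\mathcal D_-^aw_y)(x)=\frac1\pi\int_0^\infty \xi^a e^{-y\xi^2/2}\cos\!\bigl(x\xi-\tfrac{\pi a}{2}\bigr)\,\mathrm d\xi .
\]
The substitution $\xi=t/\sqrt y$ reduces this to the classical cosine representation
\[
D_a(z)=\sqrt{2/\pi}\,e^{z^2/4}\int_0^\infty t^ae^{-t^2/2}\cos\!\bigl(zt-\tfrac{\pi a}{2}\bigr)\,\mathrm dt,
\qquad a>-1
\]
(DLMF, Section~12.5). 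This gives
\[
y^a\mathcal D_-^aw_y=(2\pi)^{-1/2}y^{(a-1)/2}e^{-x^2/(4y)}D_a(x/\sqrt y).
\]
Comparing with \eqref{eq:H} and \eqref{GD}, this equals $w_yH_a(\cdot,y)$.

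\textbf{Step 3: justifying the cosine representation.} This is the step I expect to be the main obstacle, since the paper only records the integral representation \eqref{eq:Dint}. I would proceed as follows.

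1. Take $\operatorname{Re}a<0$ and write the inverse Fourier integral as a contour integral of $\zeta^ae^{-y\zeta^2/2+x\zeta}$ over the two half-axes $\zeta=-i\xi$, $\xi\gtrless0$.
2. Rotate both rays onto the negative real axis. The Gaussian factor makes the arcs at infinity vanish, and the condition $\operatorname{Re}a>-1$ controls the small arcs at the origin.
3. The rotated integral matches \eqref{eq:Dint} after the substitution $t\mapsto -\zeta$, together with the reflection formula $\Gamma(-a)\Gamma(1+a)=\pi/\sin(\pi(-a))$ coming from the branch jump.
4. Both sides are analytic in $a$ on $\operatorname{Re}a>-1$. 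On the left, the integral converges locally uniformly there. On the right, $D_a$ is entire in $a$ by the hypergeometric formula. The identity therefore extends to all $a>-1/2$ by analytic continuation.

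\textbf{Step 4: Rodrigues action and composition.} Closed Fourier multipliers compose symbolwise, $(-i\xi)^s(-i\xi)^a=(-i\xi)^{a+s}$ for $\xi\neq0$. The product $(-i\xi)^{a+s}\widehat{w_y}$ lies in $L^2$ exactly when $a+s>-1/2$, by Step 1. Therefore
\[
\mathcal R_y^sH_a(\cdot,y)=y^sw_y^{-1}\mathcal D_-^s\bigl(y^a\mathcal D_-^aw_y\bigr)=y^{a+s}w_y^{-1}\mathcal D_-^{a+s}w_y=H_{a+s}(\cdot,y).
\]
The domain requirement for $\mathcal R_y^s$ is that $w_yH_a$ lie in $\Dom(\mathcal D_-^s)$, i.e.\ $(-i\xi)^{a+s}e^{-y\xi^2/2}\in L^2$. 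This is exactly the condition $a+s>-1/2$, and the intermediate function $w_yH_a$ itself requires $a>-1/2$. The case $a=0$ gives $\mathcal R_y^a1=H_a$.

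Applying the same computation twice under the three conditions on $a$, $a+s$ and $a+s+t$ yields
\[
\mathcal R_y^t\mathcal R_y^sH_a=H_{a+s+t}=\mathcal R_y^{s+t}H_a .
\]
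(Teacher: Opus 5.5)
\textbf{Overall.} Your argument is correct and takes a genuinely different route from the paper, provided Step~2 rests on the cosine representation as a \emph{cited} formula. It is the representation in DLMF \S12.5(i), written there for $U(a,z)=D_{-a-1/2}(z)$ under $\operatorname{Re}a<\tfrac12$, that is, for $D_a$ with $a>-1$. Your pairing of $\pm\xi$, the scaling $\xi=t/\sqrt y$, and the comparison with $w_yH_a(\cdot,y)=(2\pi)^{-1/2}y^{(a-1)/2}e^{-x^2/(4y)}D_a(x/\sqrt y)$ all check out.

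\textbf{Comparison with the paper.} The paper proves \eqref{eq:Rodrigues} first for $-\tfrac12<a<0$. It recognises the rescaled Laplace-type integral \eqref{eq:Dint} as the Liouville integral $I_-^{-a}w_y$, and identifies its Fourier symbol as $(-i\xi)^a$ through the damped kernel $e^{-\varepsilon u}u^{-a-1}/\Gamma(-a)$. It then extends to all $a>-\tfrac12$ by holomorphy in the order. Your route handles the whole range in one computation and needs neither the fractional-integral identification nor analytic continuation. It does, however, import a Fourier-type representation of $D_a$ that the paper does not record. The paper's route uses only \eqref{eq:Dint} and ties $\mathcal D_-^a$ to the classical Liouville operators. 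Your Step~4 coincides with the paper's.

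\textbf{Step~3 would fail.} The derivation you sketch there does not work and should be dropped, or replaced by the paper's argument.
\begin{itemize}
\item On $\zeta=-i\xi$ the inverse Fourier integrand is $\zeta^ae^{y\zeta^2/2-x\zeta}$, not $\zeta^ae^{-y\zeta^2/2+x\zeta}$. With your signs, the integral along the imaginary axis diverges like $e^{y\xi^2/2}$.
\item With the correct signs and $\zeta=Re^{i\theta}$, you have $|e^{y\zeta^2/2}|=e^{yR^2\cos(2\theta)/2}$. This grows in the sectors $|\arg\zeta|<\pi/4$ and $|\arg\zeta|>3\pi/4$. So the arcs at infinity do not vanish, and the contour cannot be moved onto the negative real axis.
\item The cosine integral carries the power $t^{a}$, whereas \eqref{eq:Dint} carries $t^{-a-1}$. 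Passing from one to the other is a Fourier-duality step: write $(-i\xi)^a$ as a Gamma integral of $u^{-a-1}e^{iu\xi}$ and apply Fubini (justified by damping). That is exactly the paper's convolution argument, not a rotation of the contour you describe.
\end{itemize}
The only place a contour rotation works cleanly here is the Gaussian-free kernel integral $\int_0^\infty u^{-a-1}e^{iu\xi}\,\mathrm du=\Gamma(-a)(-i\xi)^a$ for $-1<a<0$ and $\xi\neq0$. No reflection-formula factor is involved there.
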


\begin{proof}
We first identify the domain of $\mathcal R_y^s$. Since
$\widehat w_y(\xi)=e^{-y\xi^2/2}$, the Gaussian controls
integrability at infinity, while
\[
\int_0^1 \xi^{2a}\,\mathrm d\xi<\infty
\quad\Longleftrightarrow\quad a>-\tfrac12.
\]
Thus $w_y\in\Dom(\mathcal D_-^a)$ exactly in this range;
the resulting Fourier transform also belongs to $L^1$.

We next prove the formula for $a{\,\in\,}(-1/2,0)$, where the
operator $\mathcal D_-^a$ is a fractional integral. By \eqref{eq:Dint},
\[
\frac{1}{\Gamma(-a)}
\int_0^\infty u^{-a-1}w_y(x+u)\,\mathrm du
=\frac{y^{-a/2}}{\sqrt{2\pi y}}
e^{-x^2/(4y)}D_a(x/\sqrt y).
\]
Multiplying the integration kernel by $e^{-\varepsilon u}$
makes it integrable and gives the Fourier multiplier
$(\varepsilon-i\xi)^a$.
As $\varepsilon\downarrow0$, dominated convergence in
$L^2$ on the Fourier side and monotone convergence in
the displayed integral identify its left-hand side with
$\mathcal D_-^aw_y$. Multiplication by $y^a$ and the
definition \eqref{eq:H} prove \eqref{eq:Rodrigues}
for $a{\,\in\,}(-1/2,0)$.

To extend the identity, fix $x$. Both sides of
\eqref{eq:Rodrigues} extend holomorphically to
$\operatorname{Re}a>-1/2$: for the Fourier integral,
domination on compact subsets remains valid after
inserting powers of $\log|\xi|$, and $D_a$ depends
holomorphically on its order. The identity theorem
therefore proves the formula throughout this half-plane.

Finally, \eqref{eq:Rodrigues} gives
\[
\widehat{w_yH_a(\cdot,y)}(\xi)
=y^a(-i\xi)^a e^{-y\xi^2/2}.
\]
Applying $\mathcal D_-^s$ multiplies this transform by
$(-i\xi)^s$. The identity
$(-i\xi)^s(-i\xi)^a=(-i\xi)^{a+s}$ yields
\[
y^s\mathcal D_-^s(w_yH_a(\cdot,y))
=w_yH_{a+s}(\cdot,y),
\]
provided $a,a+s>-1/2$.
Dividing by $w_y$ proves the change-of-order formula.
A second application gives the composition identity
when also $a+s+t>-1/2$. This completes the proof.
\end{proof}

We now apply the results of Proposition \ref{prop:rodrigues}  to the   special function factors in the two branches of  (\ref{eq:ansatz}), in particular $H_{\beta_\mu(\alpha_k(\mu))}(\cdot,\mu y)$ generated by 
$\beta_\mu(\alpha_k(\mu))$.
Every left order is covered: combining \eqref{eq:order}
with the nonnegativity of the eigenvalues in
\eqref{eq:bounds} gives
\[
\beta_\mu(\alpha_k(\mu))
=\mu\bigl(\alpha_k(\mu)+1/2\bigr)-1/2>-1/2.
\]
Negative orders are needed for the lowest eigenfunction.
For $0<\mu<1$, Proposition~\ref{prop:limits} gives
$0<\alpha_0(\mu)<1$. Using the positive ground state
from Theorem~\ref{thm:spectrum}, its branch representation
\eqref{eq:ansatz} and the origin values \eqref{eq:origin},
we obtain $(e_0^{(\mu)})'(0)>0$.
The matching conditions \eqref{eq:matrix} therefore require
$D_{\beta_\mu(\alpha_0(\mu))}'(0)<0$.
As $\beta_\mu(\alpha_0(\mu))\in(-1/2,1)$,
\eqref{eq:origin} then gives
$-1/2<\beta_\mu(\alpha_0(\mu))<0.$

To compare the changed orders, apply $\mathcal R_y^s$
to $H_{\alpha_k(\mu)}(x,y)$ on the right.
On the left, write $u=-x>0$ and apply
$\mathcal R_{\mu y}^s$ in the variable $u$ to
$H_{\beta_\mu(\alpha_k(\mu))}(u,\mu y)$,
leaving the multiplicative Gaussian weight
$\rho_{\mu,y}(-x)$ defined in \eqref{eq:rho} unchanged.
For $s>0$, these operations produce the orders
$\alpha_k(\mu)+s$ and $\beta_\mu(\alpha_k(\mu))+s$.
The affine relation \eqref{eq:order} instead requires
the left order
\[
\beta_\mu(\alpha_k(\mu)+s)
=\beta_\mu(\alpha_k(\mu))+\mu s.
\]
The difference between the produced and required orders is
\[
\bigl[\beta_\mu(\alpha_k(\mu))+s\bigr]
-\beta_\mu(\alpha_k(\mu)+s)
=(1-\mu)s.
\]
Thus equal nonzero changes cannot produce another
matched eigenfunction when $0<\mu<1$.
Multiplying the branches by nonzero constants cannot
remove this discrepancy,
since a nonzero function cannot solve the
parabolic-cylinder equation \eqref{eq:Dode} at two
distinct orders. The same argument covers
$s<0$ whenever both changed orders exceed $-1/2$.

Changes of $s$ and $\mu s$ preserve \eqref{eq:order},
but the new right order must still satisfy
$\Delta_\mu(\alpha_k(\mu)+s)=0$, with $\Delta_\mu$
defined in \eqref{eq:delta}, and the branch coefficients
must satisfy the matching conditions \eqref{eq:matrix}
at that new order.
At $\mu=1$, the spectrum is $\alpha_k(1)=k$
(Proposition~\ref{prop:limits}), so a noninteger
change leaves the spectrum. This conclusion concerns the
specified operations on the special-function factors,
with the Gaussian weight retained; it is not a formula
for differentiating the whole transported branch.
Fractional evolution of the fixed basis is defined
spectrally in Section~\ref{sec:fractional}.

\section{Spectral fractional dynamics and parameter limits}\label{sec:fractional}
\subsection{Spectral evolution and inverse clocks}

  We construct fractional evolution from the deformed number
operator $N_\mu$ (see Definition \ref{NumOp}). Fractional powers modify its spectral rates,
while a Caputo time derivative replaces exponential decay
by Mittag--Leffler relaxation.

 The ground-state transformation \eqref{eq:ground} of the
interface basis in Theorem~\ref{thm:spectrum} supplies
an orthonormal eigenbasis for $N_\mu$ in $L^2(\nu_\mu)$.
Standard
spectral calculus now gives fractional evolution by changing the scalar
factor attached to each eigenfunction. We distinguish the spatial power
$\sigma$, which changes spectral rates, from the Caputo order $\tau$,
which describes inverse-clock waiting intervals. The threshold
calculation uses $\sigma=1$ and $0<\tau<1$.

Fix $0<\mu\leq1$. Throughout this subsection, norms and inner products refer to $L^2(\nu_\mu)$. For $f\in L^2(\nu_\mu)$, let $f_k=\langle f,r_k^{(\mu)}\rangle$ denote its spectral coefficients, and let $P_0f=f_0\,1$ be its projection onto the constants. Recall that $r_0^{(\mu)}=1$, $N_\mu r_k^{(\mu)}=\lambda_k(\mu)r_k^{(\mu)}$, and $0=\lambda_0(\mu)<\lambda_1(\mu)<\cdots$.

 For $0<\sigma\leq1$, the spectral calculus
\cite[Eqs.~(12.10)--(12.11), p.~183]{Schilling}
defines
\[
(N_\mu/2)^\sigma f
=\sum_{k\geq0}(\lambda_k(\mu)/2)^\sigma f_k r_k^{(\mu)},
\]
on the domain consisting of those $f{\in}L^2(\nu_\mu)$ for which
$
\sum_{k\geq0}(\lambda_k(\mu)/2)^{2\sigma}|f_k|^2{<}\infty.
$
Thus the eigenfunctions remain unchanged, while their eigenvalues are replaced by fractional powers \cite{Balakrishnan,Schilling}. The semigroup $e^{-t(N_\mu/2)^\sigma}$ gives the mild solution of
\[
\partial_tu+(N_\mu/2)^\sigma u=0,
\qquad u(0)=f.
\]
For $f$ in the displayed operator domain, the solution is continuously differentiable in $L^2$ and continuous in the graph norm, including at $t=0$.

The probabilistic meaning follows from stable subordination. For $0<\sigma<1$, let $S_t^{(\sigma)}$ be the $\sigma$-stable subordinator normalised by $\E e^{-aS_t^{(\sigma)}}=e^{-ta^\sigma}$ for $a\geq0$. It is a nondecreasing L\'evy process. Applying its Laplace transform to each eigenfunction  gives
\[
e^{-t(N_\mu/2)^\sigma}f
=\int_0^\infty e^{-sN_\mu/2}f\,
\mathbb P(S_t^{(\sigma)}\in\mathrm ds).
\]
 Consequently, for $0<\sigma<1$, the fractional semigroup
$e^{-t(N_\mu/2)^\sigma}$ is obtained by time-changing the
diffusion with generator $(-N_\mu/2)$ by an independent
$\sigma$-stable subordinator $(S_t^{(\sigma)})_{t\ge0}$, normalized by
$
\mathbb E[e^{-\lambda S_t^{(\sigma)}}]=e^{-t\lambda^\sigma}$,
$\lambda,t\ge0
$
(see \cite[Remark~13.12, p.~211, and Eq.~(13.27), p.~216]{Schilling}).
At $\sigma=1$, the same formula gives the deterministic
clock $S_t^{(1)}=t$.

To introduce waiting intervals, take a separate order $0<\tau<1$   and the inverse stable clock $E_t^{(\tau)}=\inf\{s\geq0:S_s^{(\tau)}>t\}$. Intervals crossed by jumps of the subordinator become intervals on which its inverse is constant. The clock transform is expressed through the Mittag--Leffler function
\[
E_\tau(z)=\sum_{n=0}^\infty\frac{z^n}{\Gamma(\tau n+1)}:
\qquad \E e^{-aE_t^{(\tau)}}=E_\tau(-at^\tau),\quad a\geq0.
\]
For these identities and the inverse-clock interpretation,
see \cite[Sections~2 and~5.8]{Meerschaert}.

The corresponding time derivative is the Caputo derivative \cite{Kiryakova}
\[
{}^CD_t^\tau u(t)
=\frac1{\Gamma(1-\tau)}
\int_0^t(t-s)^{-\tau}u'(s)\dd s,
\]
whenever this Bochner integral exists. We consider the initial-value problem
\[
{}^CD_t^\tau u+(N_\mu/2)^\sigma u=0,\qquad u(0)=f.
\]
Its spectral solution is
\begin{equation}\label{eq:caputosolution}
u(t)=\sum_{k\geq0}f_k
E_\tau\!\left[-t^\tau(\lambda_k(\mu)/2)^\sigma\right]r_k^{(\mu)}.
\end{equation}
Pollard's complete-monotonicity theorem \cite{Pollard} gives $0\leq E_\tau(-a)\leq1$ for $a\geq0$. Parseval and dominated convergence therefore show that this series defines a bounded continuous $L^2$-valued function, with $u(0)=f$ and $\|u(t)\|_2\leq\|f\|_2$. Its Laplace transform is
\[
\int_0^\infty e^{-pt}u(t)\dd t
=p^{\tau-1}\bigl[p^\tau I+(N_\mu/2)^\sigma\bigr]^{-1}f,
\qquad p>0,
\]
which characterises the mild solution uniquely. Equivalently, one runs the process with semigroup $e^{-s(N_\mu/2)^\sigma}$ on an independent inverse clock. For bounded Borel data this is an ordinary expectation, which extends by contraction to $L^2(\nu_\mu)$. This is the fractional Cauchy construction of Baeumer and Meerschaert \cite[Theorem~3.1]{Baeumer}.

For domain data, the strong equation requires control near the initial time. The scaling identity $E_t^{(\tau)}\overset{d}=t^\tau E_1^{(\tau)}$ and the moment $\E E_1^{(\tau)}=1/\Gamma(1+\tau)$ allow differentiation of the scalar clock transform. Applying the resulting bound to the spectral coefficients gives
\[
\|u'(t)\|_2
\leq\frac{t^{\tau-1}}{\Gamma(\tau)}\|(N_\mu/2)^\sigma f\|_2,
\qquad f\in\Dom((N_\mu/2)^\sigma).
\]
The bound is integrable at zero. Hence $u$ is absolutely continuous on bounded time intervals, the operator image $(N_\mu/2)^\sigma u(t)$ is continuous, and termwise integration yields
\[
u(t)=f-\frac1{\Gamma(\tau)}
\int_0^t(t-s)^{\tau-1}(N_\mu/2)^\sigma u(s)\dd s,\qquad t>0.
\]
 At $\tau=1$, we use $E_1(z)=e^z$, recovering ordinary evolution. In every case the  constant eigenfunction is preserved.

The same spectral calculus gives bounded smoothing operators. 
For $\theta>0$, we define the Bessel-type and centred
Riesz-type potentials associated with $N_\mu$ by
the negative-power spectral calculus
\cite[Section~2, Eq.~(2.3)]{Stinga}:
$$
(I+N_\mu)^{-\theta}f
=\sum_{k\geq0}(1+\lambda_k(\mu))^{-\theta}f_kr_k^{(\mu)},\quad
N_\mu^{-\theta}(I-P_0)f
=\sum_{k\geq1}\lambda_k(\mu)^{-\theta}f_kr_k^{(\mu)}.
$$
The positive gap makes the second operator bounded; it is defined to
vanish on constants. Orders add under composition within either
family. Proposition~\ref{prop:limits} also gives the one-coordinate
Hilbert--Schmidt property for the Bessel potential when $\theta>1/2$.
The standard semigroup representation of positive powers
\cite{Balakrishnan,Schilling} and Theorem 1.1 from
\cite{Stinga} apply to $N_\mu/2$ for $0<\sigma<1$; in the extension formula
the  constant eigenfunction is retained separately from the  eigenfunctions corresponding to positive eigenvalues.

The evolution and potential formulas apply to the product operator
$\mathbf N_\mu$ by replacing $r_k^{(\mu)}$ and $\lambda_k(\mu)$ with
$R_{\mathbf k}^{(\mu)}$ and $\Lambda_{\mathbf k}(\mu)$. 
 For the countable product operator of
Theorem~\ref{thm:product}, every positive eigenvalue
has infinite multiplicity: placing a fixed finite
pattern of nonzero indices in disjoint sets of
coordinates gives infinitely many orthogonal
eigenfunctions with the same eigenvalue
\eqref{eq:prod}.
Consequently, its Bessel potentials are not
Hilbert--Schmidt.  For the collective calculation,
the essential fact is that a shared inverse clock acts on  the sum of coordinate eigenvalues for each product eigenfunction.

\subsection{Truncation and an interface coefficient identity}

Let $P_N$ project onto $r_0^{(\mu)},\ldots,r_N^{(\mu)}$. Parseval's identity and monotonicity of each multiplier give, for $0<\sigma,\tau\leq1$,
\begin{equation}\label{eq:tail}
\|(I-P_N)u(t)\|_2\leq E_\tau[-t^\tau(\lambda_{N+1}(\mu)/2)^\sigma]\|(I-P_N)f\|_2.
\end{equation}
The corresponding Bessel bound has multiplier $(1+\lambda_{N+1}(\mu))^{-\theta}$. For centred $f$, truncating $N_\mu^{-1}f$ at eigenfunction $N$ gives $L^2$ and derivative errors at most $\lambda_{N+1}(\mu)^{-1}\|f\|_2$ and $\lambda_{N+1}(\mu)^{-1/2}\|f\|_2$. These are analytical bounds; numerical certification requires a certified lower bound for the omitted gap, not just a root residual.

For a threshold observation, the coefficients can be calculated from values and derivatives at the interface. For $f=\ind_{(0,\infty)}$ put $p_\mu=\nu_\mu((0,\infty))$. Then $f_0=p_\mu$, and the Lagrange identity for $e_0^{(\mu)}$ and $e_k^{(\mu)}$ on the positive half-line gives
\begin{equation}\label{eq:indicator}
f_k=\frac{w_1(0)}{\lambda_k(\mu)}
\left[e_0^{(\mu)}(0)(e_k^{(\mu)})'(0)-(e_0^{(\mu)})'(0)e_k^{(\mu)}(0)\right],\quad k\geq1.
\end{equation}
Indeed $f_k=\int_0^\infty e_0^{(\mu)}e_k^{(\mu)}\dd\gamma$. Multiply the equation for $e_k^{(\mu)}$ by $e_0^{(\mu)}$, subtract the equation for the first eigenfunction multiplied by $e_k^{(\mu)}$, and integrate over the positive half-line. Integration by parts gives the displayed boundary flux. Polynomial growth on this half-line removes the term at infinity. No division by the interface value occurs, so eigenfunctions vanishing at the interface are included.

Since $\|f\|_2^2=p_\mu$, \eqref{eq:tail} becomes
\begin{equation}\label{eq:indicatortail}
\|u(t)-P_Nu(t)\|_2\leq E_\tau[-t^\tau(\lambda_{N+1}(\mu)/2)^\sigma]
\left(p_\mu-\sum_{k=0}^N|f_k|^2\right)^{1/2}.
\end{equation}
Every truncation preserves mean $p_\mu$. The full solution is a half-line probability under the appropriate clocks, but a finite sum need not lie in $[0,1]$: at $\mu{\,=\,}\sigma{\,=\,}\tau{\,=\,}1$, $N=1$, it is $1/2+e^{-t/2}x/\sqrt{2\pi}$. Thus \eqref{eq:indicatortail} is an $L^2$ error estimate, not a pointwise probability bound.

\subsection{Gaussian and radial Ornstein--Uhlenbeck endpoints}

The endpoints have a direct interpretation. As $\mu\uparrow1$, the
extra potential disappears and the equilibrium law becomes Gaussian.
As $\mu\downarrow0$, confinement suppresses the negative half-line;
the limiting eigenfunctions of the original operator have zero trace at the origin. The ground-state
transform of this Dirichlet limit produces the radial diffusion below.
 Because the probability measures vary with $\mu$,
we compare the evolution operators on the fixed space
$L^2(\gamma)$ using the unitary maps \eqref{eq:Gmu}. On $x>0$ define
\[
\nu_0(\mathrm dx)=2x^2\gamma(\mathrm dx),\qquad G_0f(x)=\sqrt2\,xf(x),\qquad
N_0=G_0^{-1}(A_{1,+}^D-I)G_0,
\]
where $A_{1,+}^D$ is the positive-half-line Dirichlet realisation of $(-\partial_{xx}+x\partial_x)$. The measure $\nu_0$ is a probability law and $G_0$ is unitary onto $L^2((0,\infty),\gamma)$. The expression for $N_0$ is 
$(-\partial_{xx}+(x-2/x)\partial_x)$; its domain is specified by the transform, without an extra boundary condition on this singular expression. Thus $(-N_0/2)$ is the radial Ornstein--Uhlenbeck generator in dimension three. Let $\iota_+$ denote zero extension and $\iota_+^*$ restriction.

\begin{theorem}[Endpoint laws and fractional operators]\label{thm:endpoints}
The laws $\nu_\mu$ converge in total variation to $\gamma$ as $\mu\uparrow1$ and to $\nu_0$, extended by zero on $x<0$, as $\mu\downarrow0$. For fixed $0<\sigma,\tau\leq1$ and $0<\varepsilon<T<\infty$, the operators
\[
G_\mu E_\tau[-t^\tau(N_\mu/2)^\sigma]G_\mu^{-1},\qquad E_\tau(z)=\sum_{n=0}^\infty\frac{z^n}{\Gamma(\tau n+1)},
\]
converge in norm on $L^2(\gamma)$, uniformly for $t\in[\varepsilon,T]$, to
\[
\begin{cases}
E_\tau[-t^\tau(A_1/2)^\sigma],&\mu\uparrow1,\\
\iota_+G_0E_\tau[-t^\tau(N_0/2)^\sigma]G_0^{-1}\iota_+^*,&\mu\downarrow0.
\end{cases}
\]
\end{theorem}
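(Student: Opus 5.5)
The plan is to reduce everything to convergence of finitely many eigenpairs of $A_\mu$ on the fixed space $L^2(\gamma)$, plus a uniform tail estimate. Write $\varphi_t(\lambda)=E_\tau[-t^\tau(\lambda/2)^\sigma]$ for $\lambda\geq0$. Since $G_\mu N_\mu G_\mu^{-1}=A_\mu-\alpha_0(\mu)I$ and $G_\mu r_k^{(\mu)}=e_k^{(\mu)}$, the spectral calculus gives
\[
G_\mu\varphi_t(N_\mu)G_\mu^{-1}
=\sum_{k\geq0}\varphi_t\bigl(\alpha_k(\mu)-\alpha_0(\mu)\bigr)\,
\langle\cdot,e_k^{(\mu)}\rangle_{L^2(\gamma)}\,e_k^{(\mu)} .
\]
The first limit operator is $\sum_k\varphi_t(k)\langle\cdot,e_k^{(1)}\rangle e_k^{(1)}$. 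For the second, unitarity of $G_0$ gives $\iota_+\varphi_t(A^D_{1,+}-I)\iota_+^*$. The eigenpairs of $A^D_{1,+}$ are $2j+1$ and $\iota_+^*f_j$, where $f_j$ is the zero extension of the normalised restriction of $H_{2j+1}(\cdot,1)$ to $(0,\infty)$, as identified in the proof of Proposition~\ref{prop:limits}. Hence the second limit operator is $\sum_j\varphi_t(2j)\langle\cdot,f_j\rangle f_j$.

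\textbf{Step 1: uniform tails.} Pollard's theorem makes $\varphi_t$ nonincreasing in $\lambda$, with values in $[0,1]$. For $\lambda\geq0$ and $t\geq\varepsilon$ we have $\varphi_t(\lambda)\leq E_\tau[-\varepsilon^\tau(\lambda/2)^\sigma]$, which tends to $0$ as $\lambda\to\infty$; for $\tau<1$ this follows from the decay $E_\tau(-x)\sim x^{-1}/\Gamma(1-\tau)$. The bound $\alpha_k(\mu)\geq k$ and $\alpha_0(\mu)<1$ from Proposition~\ref{prop:limits} give $\alpha_k(\mu)-\alpha_0(\mu)\geq k-1$. Hence the part of each sum with $k>K$ has operator norm at most $\sup_{t\in[\varepsilon,T]}\varphi_t(K-1)\to0$, uniformly in $\mu$ and $t$. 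The limit operators have the same property.

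\textbf{Step 2: finite parts.} For each fixed $k$ we need $\alpha_k(\mu)\to k$ or $2k+1$, which is Proposition~\ref{prop:limits}, and $\alpha_0(\mu)\to0$ or $1$. We also need the rank-one projections onto $e_k^{(\mu)}$ to converge in norm, which is equivalent to $L^2(\gamma)$ convergence of $e_k^{(\mu)}$ up to sign. The function $(\lambda,t)\mapsto\varphi_t(\lambda)$ is uniformly continuous on compact sets, so the finite sums then converge uniformly for $t\in[\varepsilon,T]$.

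For eigenvector convergence I would reuse the compactness argument of Proposition~\ref{prop:limits}. The family $\{e_k^{(\mu)}\}$ is bounded in $W^{1,2}(\gamma)$ because $\alpha_k(\mu)\leq2k+1$. Along a subsequence it converges strongly in $L^2(\gamma)$ and weakly in $W^{1,2}(\gamma)$ to an orthonormal family $g_0,\dots,g_k$.

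For $\mu\uparrow1$, test the form identity $a_{\mu,1}[e_j^{(\mu)},h]=\alpha_j(\mu)\langle e_j^{(\mu)},h\rangle$ against $h\in C_c^\infty(\R)$. Since $c_{\mu,1}\to0$, the limit gives $a_{1,1}[g_j,h]=j\langle g_j,h\rangle$, so $g_j$ is an eigenfunction of $A_1$ with eigenvalue $j$. Simplicity of the eigenvalues forces $g_j=\pm e_j^{(1)}$.

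For $\mu\downarrow0$, the limits vanish on $(-\infty,0)$ and at the origin. Testing against $h\in C_c^\infty((0,\infty))$ shows that $g_j$ is a Dirichlet eigenfunction on the half-line with eigenvalue $2j+1$, so $g_j=\pm f_j$.

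Uniqueness of the limits makes the whole family converge, not just subsequences. The projections are insensitive to sign, so no sign normalisation is needed for $k\geq1$.

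\textbf{Step 3: total variation.} For $k=0$, positivity of $e_0^{(\mu)}$ fixes the sign. Thus $e_0^{(\mu)}\to1$ as $\mu\uparrow1$. As $\mu\downarrow0$, $e_0^{(\mu)}\to\sqrt2\,x\ind_{(0,\infty)}$, because $\int_0^\infty x^2\,\gamma(\mathrm dx)=1/2$. The estimate
\[
\bigl\|f^2-g^2\bigr\|_{L^1(\gamma)}\leq\|f-g\|_{L^2(\gamma)}\,\|f+g\|_{L^2(\gamma)}
\]
then gives total-variation convergence of $\nu_\mu$ to $\gamma$ and to $\nu_0$ extended by zero on $x<0$. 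Combining the tail bound with the finite-part convergence proves the norm convergence, uniformly on $[\varepsilon,T]$.

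The main obstacle is identifying the subsequential limits as eigenfunctions of the limit operators in the $\mu\downarrow0$ case. There the form $a_{\mu,1}$ diverges on the negative half-line, so the identification must use only test functions supported in $(0,\infty)$, together with the trace condition $g_j(0)=0$ inherited from weak $W^{1,2}$ convergence. One must also check that the weak limit lies in the Dirichlet form domain, so that the equation holds in the Dirichlet realisation and not merely as a distributional ODE. If this direct identification proves delicate, I would instead use the lower-semicontinuity and min--max argument already given in Proposition~\ref{prop:limits} for the span of $g_0,\dots,g_k$. That argument yields Rayleigh quotients at most $2j+1$ for an orthonormal family in the Dirichlet form domain, and an induction on $j$ would then force $g_j=\pm f_j$.
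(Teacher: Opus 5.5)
Your proposal is correct and follows essentially the same route as the paper. Like the paper, you obtain convergence of finitely many rank-one eigenprojections from the compactness argument of Proposition~\ref{prop:limits}, identifying the limits by testing the weak equation (only against test functions supported in $(0,\infty)$, using the zero trace, at the Dirichlet endpoint); you fix the ground-state limits by positivity and get total variation from $\|a^2-b^2\|_1\leq\|a-b\|_2\|a+b\|_2$; and you control the tails uniformly in $\mu$ and $t\in[\varepsilon,T]$ via $\lambda_k(\mu)\geq k-1$ and the monotone decay of $E_\tau$, with your explicit spectral forms of the two limit operators and the Dirichlet form-domain check filling in details the paper leaves implicit.
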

\begin{proof}
The argument has two parts: convergence of finitely many eigenprojections, then a uniform bound on the remaining multipliers. The compactness argument in Proposition~\ref{prop:limits} applies to each fixed eigenfunction. At $\mu\uparrow1$, testing its weak equation identifies every subsequential limit as a normalised Hermite eigenfunction of eigenvalue $k$. At $\mu\downarrow0$, the limit vanishes on $x<0$, has zero trace, and testing inside $x>0$ identifies the Dirichlet eigenvalue $2k+1$. Simplicity gives convergence of the rank-one orthogonal projections in operator norm, independently of the choice of signs. Positivity fixes the limits of the first eigenfunction:
\[
e_0^{(\mu)}(x)\longrightarrow1\quad(\mu\uparrow1),\qquad
e_0^{(\mu)}(x)\longrightarrow\sqrt2\,x\ind_{\{x>0\}}\quad(\mu\downarrow0)
\]
in $L^2(\gamma)$. The inequality $\|a^2-b^2\|_1\leq\|a-b\|_2\|a+b\|_2$ proves total-variation convergence.

As $\mu\downarrow0$, after choosing the signs consistently,
the eigenfunctions satisfy
\[
e_k^{(\mu)}(x)
\longrightarrow
\frac{\sqrt{2}\,\He_{2k+1}(x)}{\sqrt{(2k+1)!}}\,
\ind_{\{x>0\}}
\qquad\text{in }L^2(\gamma).
\]
On $(0,\infty)$, the limiting functions are the normalised
eigenfunctions of $A_{1,+}^D$, the Dirichlet realisation
of $A_1$ in $L^2((0,\infty),\gamma)$ with boundary
condition $f(0)=0$, introduced immediately before
Theorem~\ref{thm:endpoints}. They are extended by zero to
the negative half-line.
Since $\lambda_k(\mu)=\alpha_k(\mu)-\alpha_0(\mu)$,
Proposition~\ref{prop:limits} gives
\[
\lambda_k(\mu)\longrightarrow
\begin{cases}
k,&\mu\uparrow1,\\
2k,&\mu\downarrow0.
\end{cases}
\]

For the tails, \eqref{eq:bounds} and $0\leq\alpha_0(\mu)<1$ give $\lambda_k(\mu)\geq k-1$. For $M\geq1$ the norm of the sum with $k>M$ is at most
$E_\tau[-\varepsilon^\tau(M/2)^\sigma],$
uniformly in $\mu$ and $t\in[\varepsilon,T]$. It tends to zero as $M\to\infty$, and the limiting spectra obey the same bound. Truncation followed by $M\to\infty$ proves the result.
\end{proof}

For the endpoint $\mu\downarrow0$, the limiting evolution
first restricts the initial datum to $(0,\infty)$,
applies the half-line evolution, and then extends the
result by zero to $(-\infty,0)$.
It therefore annihilates data supported on the negative
half-line. At $t=0$, its value is the projection
$
\iota_+\iota_+^*f=\ind_{\{x>0\}}f,
$
whereas the evolution operator for every $\mu>0$
equals the identity at $t=0$.
Consequently, convergence to this endpoint cannot
include $t=0$ on the full space $L^2(\gamma)$.

\section{Concluding remarks}

The corrected interface system leads to a product spectral chaos after
a ground-state transform. The resulting number operator has
a closed gradient--divergence factorisation and reduces to the
classical Gaussian construction at $\mu=1$. Its one-coordinate dynamics have a radial three-dimensional Ornstein--Uhlenbeck limit. The endpoint convergence also holds for the fractional
evolution operators after the Hilbert spaces have been identified.

 The Rodrigues analysis answers a separate structural question. Its
domain includes every spectral branch, but equal nonzero increments
do not preserve the deformed order relation. A fractional stochastic
integral representation would therefore require a construction beyond
these branch shifts and will be studied elsewhere.

\section*{Statements and Declarations}
\paragraph{Funding and competing interests.} The work of the second author is an output of a research project HSE-BR-2026-39 implemented as part of the Basic Research Program at HSE University.
\paragraph{Data and code availability.}
No external dataset was used. The full application to collective threshold
correlations under shared and independent inverse stable clocks, including
the theorem and its proof, is available in the public repository
\cite{Code} at \url{https://github.com/colena/FWCII}.
The repository also provides Python code and numerical outputs for the
interface eigenfunctions, threshold coefficients, collective clock
comparison, Gaussian benchmark and truncation estimates.
\paragraph{Use of generative AI.} OpenAI ChatGPT was used for language revision, checking mathematical arguments and references, and developing the numerical reproduction code. The authors are responsible for verifying and approving the final mathematical content, computations, citations and conclusions.

\end{document}